\def\G{{\mathcal{G}}}
\def\bG{{\mathbb{G}}}
\def\h{{\mathcal{H}}}
\def\F{\mathcal{F}}
\def\bd{\mathbf{d}}
\def\bu{\mathbf{u}}
\def\bw{\mathbf{w}}
\def\bs{\mathbf{s}}
\def\mfw{\mathfrak{w}}
\def\mfW{\mathfrak{W}}
\def\mfu{\mathfrak{u}}
\def\mfU{\mathfrak{U}}
\def\mfE{\mathfrak{E}}
\def\mfG{\mathfrak{G}}
\def\mfS{\mathfrak{S}}
\def\mfd{\mathfrak{d}}
\def\mff{\mathfrak{f}}
\def\bbd{\mathbf{bd}}
\def\bdd{\mathbf{dd}}
\def\a{d^+}
\def\b{d^-}
\newtheorem{remark}[theorem]{Remark}
\newtheorem{conjecture}[theorem]{Conjecture}
\newcommand{\chg}[1]{\textcolor{black}{ #1}}
\begin{document}
\title{New classes of degree sequences with fast mixing swap Markov chain sampling}

\author{P\'eter L. Erd\H os\footnotemark[2]\ \footnotemark[5]
\and Istv\'an Mikl\'os\footnotemark[1]\footnotemark[2]\ \footnotemark[5] \footnotemark[7]
\and Zolt\'an Toroczkai\footnotemark[3] \footnotemark[8] }

\renewcommand{\thefootnote}{\fnsymbol{footnote}}
\footnotetext[1]{MTA SZTAKI, L\'agym\'anyosi \'ut 11, Budapest, 1111 Hungary}
\footnotetext[2]{MTA A. R{\'e}nyi Institute of Mathematics, Re\'altanoda u 13-15 Budapest, 1053 Hungary.
{\tt email:} \{erdos.peter, miklos.istvan\}@renyi.mta.hu}
\footnotetext[3]{Department of Physics and Interdisciplinary Center for Network Science \& Applications \\
University of Notre Dame, Notre Dame, IN, 46556, USA.
 {\tt email:} toro@nd.edu}
\footnotetext[5]{Supported partly by National Research, Development and Innovation Office – NKFIH, under the grants K 116769 and SNN 116095.}
\footnotetext[7]{Correspondence to: I. Mikl\'os}
\footnotetext[8]{Supported in part by the Defense Threat Reduction
Agency, \#HDTRA 1-09-1-0039 and jointly by the U.S. Air
Force Office of Scientific Research (AFOSR) and the
Defense Advanced Research Projects Agency (DARPA) under contract FA9550-12-1-0405. }
\renewcommand{\thefootnote}{\arabic{footnote}}

\maketitle

\begin{abstract}
In network modeling of complex systems one is often required to sample random realizations of networks that obey a given set of constraints, usually in form of graph measures.  A much studied class of problems targets uniform sampling of simple graphs with given degree sequence or also with given degree correlations expressed in the form of a joint degree matrix. One approach is to use Markov chains based on edge switches (swaps) that preserve the constraints, are irreducible (ergodic) and fast mixing. In 1999, Kannan, Tetali and Vempala (KTV) proposed a simple swap Markov chain for sampling graphs with given degree sequence and conjectured that it mixes rapidly (in poly-time) for arbitrary degree sequences. While the conjecture is still open, it was proven for special degree sequences, in particular, for those of undirected and directed regular simple graphs, of half-regular bipartite graphs, and of graphs with certain bounded maximum degrees. Here we prove the fast mixing KTV conjecture for novel, exponentially large classes of irregular degree sequences. Our method is based on a canonical decomposition of degree sequences into split graph degree sequences,  a structural theorem for the space of graph realizations and on a factorization theorem for Markov chains. After introducing bipartite splitted degree sequences, we also generalize the canonical split graph decomposition for bipartite and directed graphs.
 \end{abstract}

\begin{keywords}
graph sampling, degree sequences, splitted graphs, canonical decomposition of degree sequences,  factorization theorem for Markov Chains, rapidly mixing Markov Chains
\end{keywords}

\begin{AMS}
05C30, 05C81, 68R10
\end{AMS}

\pagestyle{myheadings}
\thispagestyle{plain}
\markboth{P.L. ERD\H{O}S, I. MIKL\'OS AND Z. TOROCZKAI}{DEGREE SEQUENCE DECOMPOSITION FOR FAST MIXING MCMC}

\section{Introduction}\label{sec:intro}

\noindent
Network science has been experiencing an explosive growth with applications in social sciences, economics, transportation infrastructures (energy and materials), communications, biology (from the molecular scale to that of species interactions), climate, and even in cosmology. An important problem in network science is to algorithmically  construct {\em typical} instances of the networks under study with predefined properties, often expressed as graph measures. In particular, special attention has been devoted to sampling simple graphs with a given degree sequence, both by the statistics (binary contingency tables  \cite{Bezakova},  \cite{R1}, \cite{R4}, \cite{DG95}, \cite{sonja1}, \cite{Gross}) and the computer science communities.  For relationships with algebraic statistics, see the survey paper by Petrovi\'c \cite{sonja2}.
Graph sampling methods can be classified roughly into two types, one using direct construction methods \cite{kim1,EMT09,Blitz,SCS,scdir} combined with importance sampling \cite{SCS,Blitz,scdir} and the other using simple edge-swap Markov chains  and corresponding Markov Chain Monte Carlo (MCMC)  algorithms \cite{R6}, \cite{R7}, \cite{R8}, \cite{R5}, \cite{rand}. Our focus here is on the latter, MCMC method.

In 1999 Kannan, Tetali and Vempala \cite{KTV97} (KTV) conjectured that a simple, edge-swap based Markov chain for sampling graphs at random with given degree sequence mixes rapidly, i.e., pseudo-random realizations can be obtained after polynomially many steps (polynomial in the length of the degree sequence, or order of the graph). For this Markov chain we start from an arbitrary graph realizing the degree sequence, then repeatedly draw uniformly at random, pairs of independent edges and swap their ends to create new realizations, as long as the swaps do not create multiple edges (if they do, we do not accept the new state, we simply draw again). This edge swap (also called 2-switch) operation, which clearly preserves the degree sequence, was studied by several authors before, including Ryser \cite{ryser}, Taylor \cite{taylor} and others \cite{rao}. The corresponding Markov chain is irreducible, aperiodic, reversible (obeys detailed balance), it has a symmetric transition matrix, and thus a uniform stationary distribution.

The first result with a correct proof in connection with the KTV conjecture is due to Cooper, Dyer and Greenhill (in 2007, \cite{CDG07}) for the special case when the degree sequence is regular. Greenhill then proved in 2011  the analogous result for (in- and out-)regular directed graphs \cite{G11}. In 2013 Mikl\'os, Erd\H{o}s and Soukup proved the conjecture for {\em half-regular} bipartite graphs \cite{MES}. Here the degree sequence on one side of the partition is regular, while the degrees can be arbitrary on the other side. Most recently, Greenhill proved the conjecture for simple graphs with relatively small maximal degrees, and also recently, Erd\H{o}s, Kiss, Mikl\'os and Soukup proved the conjecture for {\em almost half-regular} bipartite graphs with certain forbidden edge sets \cite{EKMS}. (Comprehensive surveys on the topic  can be found in \cite{G11} or \cite{MES}.)

These proofs are all based on the original Sinclair's multicommodity flow method \cite{S92} and they are rather technical. In the paper \cite{MCMC-JDM} we introduced an alternative approach to help prove the fast mixing nature of a restricted, edge-swap based MCMC over the balanced graphical realizations of a given {\em Joint Degree Matrix} (JDM for short). The word ``restricted" here refers to the fact, that not all traditional swap operations are allowed in the Markov Chain in order to preserve the given JDM; for details see \cite{JDM}.  Due to the special structure of balanced realizations of a JDM, being formed by a series of {\em almost-half-regular bipartite} degree sequences  and {\em almost-regular} degree sequences, one could exploit the previously obtained fast mixing results with the help of  a {\em general decomposition theorem} for Markov chains (\cite[Theorem 4.3]{MCMC-JDM}).

\medskip\noindent In this paper we follow a similar approach to extend the degree sequence classes with provable fast mixing Markov Chains. However, instead of using the above mentioned general, but somewhat involved chain decomposition result (or other, similar MCMC decomposition methods such as \cite{DSC93, MR02, MR06}), here we will employ a decomposition theorem of a  lesser generality, but one which is much easier to apply. Essentially, it is the statement that if the space of the Markov chain can be expressed as a Cartesian product of spaces such that the chain restricted over each one of the factor spaces is rapidly mixing, then it mixes rapidly over the whole space. This result \cite[Theorem 5.1]{MCMC-JDM} will be discussed in Section \ref{sec:tools}.

We will apply our methodology to two problem classes, namely, first to the original KTV conjecture itself and then second, to a similar problem related to sampling graphical realizations with given degree spectra \chg{\cite{JDM}}. In the first application we exploit the {\em canonical decomposition} of degree sequences (and of their realizations) into split degree sequences (and into split graphs, respectively), introduced by Tyshkevich (\cite{T80, T00}) and then the fact that the graph of all the graphical realizations of a degree sequence $\bd$ (the so-called realization graph $\bG(\bd)$) can be expressed as the {\em Cartesian product} of the realization graphs of the {\em factor} degree sequences from the canonical decomposition.
The later statement was proven recently by Barrus and West \cite{BW} and Barrus \cite{B15}. We will report these results in Section \ref{sec:decomp}. By exploiting a natural correspondence between split graphs and bipartite graphs, in Section \ref{sec:bipartite} we introduce the notion of splitted bipartite sequences (and their graphical realizations) and generalize these decomposition results for bipartite and directed graphical sequences as well. In Section \ref{sec:direct} we then apply our Markov chain decomposition theorem to show fast mixing for large classes of new degree sequences, bipartite, directed and undirected (also non-bipartite), constructed from composing splitted bipartite degree sequences with known fast mixing MCMC samplers. We then present estimates and comparisons for the sizes of these new degree sequence classes.

\medskip\noindent
The second application, of a smaller scope, is closely related to the JDM problem and it is a straightforward consequence of our method. In the paper \cite{JDM} the notion of {\em degree spectrum} was introduced as part of the solution for the  connectivity problem of the space of all graphical realizations of a given JDM (so that the corresponding MCMC is irreducible). Note that the JDM, which specifies the number of connections between given degrees, also uniquely determines the degree sequence and thus it is more constraining than just the degree sequence. In other words, there can be several JDMs with the same degree sequence.  The degree spectrum of a vertex $v$ is a vector of $\Delta(G)$ elements (the maximum degree in the realization $G$)  where the $i$th element is the number of degree $i$ neighbors of $v$. The {\em degree spectra matrix} $M$ of a graph $G$ contains the degree spectra of all its vertices as columns \cite{sampling-JDM}. The degree spectra matrix (DSM) is even more constraining than the JDM, as there can be several DSMs sharing the same JDM. Recently, Barrus and Donovan studied the same notion under a different name, called {\em neighborhood degree lists} \cite{BD15}, but for different reasons. In Section \ref{sec:spectra} we will discuss the degree spectra matrices in some detail and we present a class of DSMs with fast mixing MCMC samplers. This also implies that the corresponding JDMs  and therefore the corresponding degree sequences all admit fast MCMC samplers over the set of realizations {\em restricted to these degree spectra matrices}.

\section{Preliminaries}\label{sec:tools}

In this Section we list the common definitions, notations  and some of the earlier
results on MCMC sampling needed to present our findings.

\subsection{Graphs}\label{subsec:graphs}

Let us fix a labeled underlying vertex set $V$ of $n$ elements.  All the graphs (undirected, directed and bipartite) discussed here will be simple labeled graphs, i.e., without  multiple edges or self-loops. The degree sequence $\bd(G)$ of a graph $G=(V,E)$ is the  sequence of its vertex degrees: $\bd(G)_i= d(v_i).$  A non-negative integer sequence $\bd= (d_1,\ldots, d_n)$ is {\em graphical} iff $\bd(G) = \mathbf{d}$ for some simple graph $G$,  in which case $G$ is said to be a {\em graphical realization} of $\mathbf{d}$.  $K_n$ will denote the complete graph on $n$ vertices and $K_{n,m}$ the complete bipartite graph between sets with $n$ and $m$ vertices, respectively.

Let $G$ be a simple graph and assume that $a,b,c$ and $d$ are distinct vertices.  Furthermore, assume that $(a,c), (b,d) \in E(G)$ while $(b,c), (a,d) \not \in E(G)$. Then
\begin{equation}\label{eq:swap}
E(G')= E(G) \setminus \{(a,c), (b,d)\} \cup \{(b,c), (a,d)\}
\end{equation}
is another realization of the same degree sequence. We call  such operation a {\em swap} (it  is also called a ``switch'' or a ``2-switch'' in the literature) and denote it by $ac, bd \Rightarrow bc, ad$  (the notation implies
that $(b,c)$ and $(a,d)$ were non-edges before the swap). Note that $ac, bd \Rightarrow ab, cd$ is another swap.

The swap operation allows to treat the space of all graphical realizations of a given degree sequence as a graph $\bG(\bd)$ itself: the ``vertices'' of $\bG(\bd)$ are the graphical realizations $G \in V(\bG)$ and two graphical realizations
$G,H \in V(\bG)$ are connected by an edge in $\bG$ if a swap takes one realization into the other.

\medskip\noindent
 Similar notions  can be defined for bipartite graphs. If $B$ is a simple bipartite graph then its vertex  classes/partitions will be denoted by $U(B)=\{ u_1,\ldots, u_k\}$ and $ W(B)=\{w_1,\ldots,w_\ell\}$,  respectively,  with $V(B)=U(B)\cup W(B)$.  The {\em bipartite degree sequence} of $B$, $\bbd(B)$ is defined via:
$$
\bbd(B) = \Big(\big (d(u_1), \ldots,d(u_k)\bigr ), \bigl (d(w_1),\ldots,d(w_\ell)\bigr )\Big )  = \left( \mathbf{d}(U), \mathbf{d}(W) \right).
$$
We can define the swap operation for bipartite realizations similarly to (\ref{eq:swap}) but we must take some care: it is not enough to assume that $(b,c), (a,d) \not \in E(G)$ but we also have to make sure that $a$ and $b$ are in one vertex class and $c$ and $d$ are in the other.

To make clear whether a vertex pair can form an edge in a realization or not (because the edge would be forbidden
for some reason) we will call a vertex pair a  {\em chord} if it could hold an actual edge in a realization. Those pairs which cannot accommodate an edge are the {\em non-chords}. For example, pairs from the same vertex class of a bipartite graph are non-chords.

\medskip\noindent
For directed graphs we consider the following definitions: Let $\vec G$ denote
a simple directed graph (no parallel edges, no self-loops, but oppositely directed edges between two vertices are allowed) with vertex set $X(\vec G) = \{x_1,x_2,  \ldots,x_n \}$ and edge set $E( \vec G)$. We use the bi-sequence
$$
\bdd(\vec G) =  \left( \mathbf{\a}, \mathbf{\b} \right)
$$
to denote the sequence of degrees, where $\mathbf{\a}$ stands for the out-degree sequence (i.e., $\a(x_1),$ $\ldots,\a(x_n)$ ) and $\mathbf{\b}$ for in-degrees. A bi-sequence of non-negative integers is called a {\em graphical directed degree sequence} if there exists a simple  directed graph $\vec G$ such that $\mathbf{(\a,\b)}=\bdd(\vec G)$. In this case we say that $\vec G$ {\em  realizes}   $\mathbf{(\a,\b)}$.

We will apply the following {\em representation} of  the directed graph $\vec G$ (Gale, 1957):  let $B({\vec G})=(U,W; E)$ be  a bipartite graph where each class consists of one copy of every vertex from $V(\vec G)$.  The edges adjacent to a vertex $u_x$ in class $U$ represent the out-edges from $x$, while the edges adjacent to a vertex $w_x$ in class $W$ represent the in-edges to $x$ (so a directed edge $xy$ is identified with the edge $u_xw_y$).  Since there is no self-loop in our directed graph,  there is no $(u_x,w_x)$ type edge in its bipartite realization - these vertex pairs are non-chords,
i.e, forbidden edges.

\medskip\noindent
The {\em restricted bipartite degree sequence} problem $\bbd^{\F}$ consists of a bipartite degree sequence $\bbd$ on $(U,W)$, and a set $\F \subset [U,W]$ of {\em forbidden} edges (i.e., non-chords). The problem is to decide whether there is a bipartite  graph $G$ on $(U,W)$ completely avoiding the elements of $\F$ such that it realizes the given  bipartite degree sequence. Clearly, the bipartite representation of directed graphs is a particular bipartite restricted degree sequence problem  with $\F$ a forbidden 1-factor (a not necessarily perfect matching), i.e., forbidden $(u_x,v_x)$ type edges. This problem class was introduced in paper \cite{EKMS},  along with a Havel-Hakimi type graphicality test for restricted bipartite degree sequences.

Similarly to undirected degree sequences, one can also define the corresponding realization graphs for bipartite degree sequences ($\bG(\bbd)$), directed degree sequences ($\bG(\bdd)$) and restricted bipartite degree sequences ($\bG(\bbd^{\F})$).

\subsection{Markov Chain Monte Carlo sampling}\label{subsec:MCMC}

 For an in-depth review on general MCMC sampling and mixing times see \cite{LPW08}. The standard Markov chain for graph sampling is a weighted random walk on the realization graph $\bG$ and it is an irreducible, aperiodic and reversible chain. Typically it is chosen to be a lazy chain \chg{\cite{LPW08}}, so that bounding the mixing time reduces to the analysis of the second largest eigenvalue $\lambda_2$ of its transition matrix, or equivalently of its spectral gap $1-\lambda_2$. Here we will only consider lazy chains.  Accordingly, the chain is fast mixing iff the relaxation time $(1-\lambda_2)^{-1} = O(poly(n))$, where $n$ is the length of the degree sequence (the number of vertices of the graphs realizing the sequence).(See  Sinclair, \cite[Theorem 5]{S92}.) Since we consider realization graphs  $\bG$ for  MCMC sampling only, they will be referred to here as {\em Markov graphs}.

It is well known that the space (i.e., the set) of all simple realizations of a graphical degree sequence is connected via swap operations, which implies that the corresponding swap-based Markov Chain is irreducible, and the same applies for bipartite graphs as well. For directed graphs an analogous result holds. For the bipartite representation
of directed graphs (i.e., with the forbidden 1-factor) the usual swap definition applies between pairs of vertices that form chords. In this case we call the operation a {\em $C_4$-swap}. However, the following operation is also valid: assume that in a realization $B({\vec G})$ $(u_1,v_4), (u_2, v_5)$ and $(u_3,v_6)$ are edges, $(u_1,v_5), (u_2,v_6)$ and $(u_3,v_4)$ are non-edges but chords (with $u_i \in U$ and $v_j \in W$) and finally, the other three vertex pairs are forbidden (they belong to $\F$). Then we allow the so-called $C_6$-swap \cite{EKM}: we exchange the first three with the second three: $u_1v_4,u_2v_5, u_3v_6 \ \Rightarrow \ u_1v_5,u_2v_6, u_3v_4$. This was first introduced by Kleitman and Wang (1973, \cite{KW73}) then also by Erd\H{o}s, Mikl\'os and Toroczkai (2009, \cite{EMT09}). As Greenhill pointed out \cite{G11}, in case of regular directed degree sequences the $C_6$-swaps are not necessary.

\medskip\noindent
In 1999 Kannan, Tetali and Vempala \cite{KTV97} conjectured (referred to here as the KTV conjecture) that the swap-based MCMC is {\em rapidly mixing}, i.e.,  a pseudo-random realization is achieved after polynomially many steps
in the number of vertices (length of the degree sequence).  While this conjecture is still open, there have been a series of partial results  obtained over the years for specific degree sequence classes. In the next theorem we summarize those earlier results that play a role in the present work:

\medskip
\begin{theorem}\label{th:MCMCk}
The swap Markov chain mixes rapidly for the following degree sequences:
\begin{enumerate}[{\rm (A)}]
\item $\bd$ is regular degree sequence of simple graphs: Cooper Dyer and Greenhill {\rm \cite{CDG07, CDG12}}.
\item $\bd$ is a regular directed degree sequence: Greenhill {\rm \cite{G11}}, only $C_4$-swaps are needed.
\item $\bd$ is {\em half-regular} bipartite degree sequence: Mikl\'os, Erd\H{o}s and Soukup {\rm \cite{MES}}. Half-regularity means, that in one class the degrees are the same (i.e., regular), while in the other, the only restrictions are those imposed by graphicality.
\item $\bd$ a graphical sequence with the property that the maximum degree satisfies $3 \le d_{\max} \le \frac{1}{4}\sqrt{M}$, where $M$ is the sum of the degrees: Greenhill {\rm \cite{G15}}.
\item $\bd$ belongs to an {\em almost-regular} graph, or an {\em almost-half-regular} bipartite graph\/: Er\-d\H{o}s, Mikl\'os and Toroczkai {\rm \cite{MCMC-JDM}}. Here almost-regular means that for any degree pair $|d(v_1) - d(v_2)| \le 1.$ The meaning of almost-half-regular is analogous.
\item $\bd = \bd^{\F}$ is a restricted half-regular bipartite degree sequence where $\F$ is a (partial) matching: Erd\H{o}s, Kiss, Mikl\'os and Soukup {\rm \cite{EKMS}}. The process uses $C_4$- and $C_6$-swaps, therefore, while it contains the directed degree sequence problem as a special case, it is not comparable with the result in {\rm (B)}.
\end{enumerate}
\end{theorem}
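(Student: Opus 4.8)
Since Theorem~\ref{th:MCMCk} is a compendium of six results established in the cited literature rather than a new claim, no fresh argument is required: the plan is simply to invoke the stated references, and below I sketch the common technical skeleton that all six items share. Each claim asserts that the relaxation time $(1-\lambda_2)^{-1}$ of the (lazy) swap chain on the relevant Markov graph $\bG$ is bounded by a polynomial in $n$. The uniform tool is Sinclair's multicommodity flow method~\cite{S92}: one routes, for every ordered pair of realizations $(G,H)$, a unit of flow along canonical paths in $\bG$, where each step of a path is a single swap ($C_4$-swaps throughout, and additionally $C_6$-swaps in item (F)), and then bounds the \emph{congestion} --- the maximum total flow traversing any single edge of $\bG$, weighted by path lengths. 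Once the congestion is shown to be $\poly(n)$, fast mixing follows directly from Sinclair's inequality.

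The construction of the canonical paths begins, in every case, by taking the symmetric difference $E(G)\triangle E(H)$ of the two realizations and decomposing it into a structured collection of alternating circuits (and, in the directed and restricted settings, alternating trails that respect the forbidden set $\F$). One then processes these circuits in a fixed, deterministic order, resolving each by a prescribed sequence of swaps, so that the entire path from $G$ to $H$ is uniquely determined by the pair $(G,H)$. The differences among items (A)--(F) lie entirely in the combinatorics of this decomposition and ordering: the regular and half-regular cases exploit the rigidity coming from equal degrees to control how the circuits interleave; the bounded-degree case (D) uses $d_{\max}\le \frac{1}{4}\sqrt{M}$ to limit local complexity; and the restricted case (F) must additionally guarantee that every intermediate realization avoids $\F$, which is precisely what forces the use of $C_6$-swaps and makes it incomparable with the directed case (B).

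The hard part, in each instance, is bounding the congestion, and I expect this to be the main obstacle throughout. It is carried out by the standard injective-encoding argument: for a fixed transition $Z\to Z'$ of $\bG$ carrying flow from $(G,H)$, one exhibits an auxiliary ``encoding'' object together with a decoding map that recovers the pair $(G,H)$ from $(Z,Z')$ and a bounded amount of side information. Injectivity of this map, together with a count of the admissible encodings, yields the $\poly(n)$ bound on the number of paths through any edge and on the flow they carry. Verifying that the decoding map is well-defined and injective --- equivalently, that no information about the endpoints is irretrievably lost while the circuits are being switched out --- is the delicate, class-specific step, and it is exactly here that the technical weight of \cite{CDG07,G11,MES,G15,MCMC-JDM,EKMS} resides. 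For the present paper, these six bounds are needed only as black boxes, supplied by their respective references and combined later via the factorization theorem of~\cite{MCMC-JDM}.
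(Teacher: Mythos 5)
Your proposal takes essentially the same approach as the paper: the paper offers no proof of Theorem~\ref{th:MCMCk} at all, presenting it as a summary of known results established in \cite{CDG07,CDG12,G11,MES,G15,MCMC-JDM,EKMS} and noting, exactly as you do, that all of those proofs rest on Sinclair's multicommodity flow method \cite{S92} and are technically involved. Your sketch of the canonical-path and congestion machinery is a faithful description of what happens inside those references, and you correctly treat the six bounds as black boxes to be combined later with the factorization theorem, which is precisely the paper's stance.
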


\medskip\noindent
There are other degree sequence classes for which the swap Markov chain is clearly fast mixing. For example, the so-called {\em threshold} degree sequences \cite{CH} have exactly one realization and thus their Markov chain is trivial.  In the analogous case of threshold graphs for bipartite sequences their realization is called  a {\em difference graph} and was introduced in \cite{HPS}. In general, if there are only a small number of possible realizations of a degree sequence, then the corresponding swap Markov chain is fast mixing:
\begin{lemma}\label{th:kicsi0}
When the  number of possible realizations is polynomial in the size of the bipartite degree sequence, then the corresponding Markov chain is fast mixing.
\end{lemma}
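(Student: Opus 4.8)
The plan is to bound the spectral gap $1-\lambda_2$ directly from below by an inverse polynomial, which by the definition of fast mixing recalled above is all that is required. The cleanest route is through the conductance (Cheeger) inequality: for a lazy reversible chain one has $1-\lambda_2 \ge \Phi^2/2$, where $\Phi$ is the conductance of the chain. So it suffices to show that $\Phi \ge 1/\poly(n)$, where $n$ is the size of the bipartite degree sequence.

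First I would record the two ingredients that make the conductance large. Let $N = |V(\bG(\bbd))|$ be the number of realizations, which is $\poly(n)$ by hypothesis, and recall that the stationary distribution is uniform, $\pi \equiv 1/N$. Next, every nonzero off-diagonal transition probability of the swap chain is at least $1/\poly(n)$: a single swap is selected by drawing a bounded number of edges among the at most $\binom{n}{2}$ candidate vertex pairs, so each admissible transition has probability at least $p_{\min} = \Omega(1/n^{6})$ (the laziness factor of $1/2$ only costs a constant, and even $C_6$-swaps involve drawing a constant number of edges). Finally, irreducibility of the swap chain---guaranteed by the connectivity of $\bG(\bbd)$ under swaps---means the Markov graph is connected.

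With these in hand the conductance bound is immediate. Fix any vertex set $S$ with $\pi(S)\le 1/2$. Since $\bG(\bbd)$ is connected, at least one swap edge crosses the cut, say from $x\in S$ to $y\notin S$, and it contributes $\pi(x)P(x,y) \ge (1/N)\,p_{\min}$ to the numerator of $\Phi$. As $\pi(S)\le 1/2$, this gives
$$
\Phi \;\ge\; \frac{(1/N)\,p_{\min}}{1/2} \;=\; \frac{2\,p_{\min}}{N} \;\ge\; \frac{1}{\poly(n)},
$$
using $N=\poly(n)$ and $p_{\min}=1/\poly(n)$. Plugging this into Cheeger's inequality yields $1-\lambda_2 \ge \Phi^2/2 \ge 1/\poly(n)$, hence the relaxation time $(1-\lambda_2)^{-1}$ is polynomial and the chain is fast mixing.

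There is no serious obstacle here; the statement is essentially a sanity check that a polynomial-size state space forces fast mixing, and the same argument applies verbatim to the directed and restricted settings. The only points demanding care are bookkeeping ones: verifying that the minimum nonzero transition probability is genuinely inverse-polynomial for the specific swap chain in use, and invoking the lazy-chain form of the Cheeger bound so that $\lambda_2$ controls the relaxation time without interference from an eigenvalue near $-1$.
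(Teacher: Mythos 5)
Your proof is correct and follows essentially the same route as the paper: Lemma \ref{th:kicsi0} is stated there without an explicit proof, but the paper's proof of the companion Lemma \ref{th:kicsi} is exactly your argument---uniform stationary distribution with inverse-polynomial mass per state, inverse-polynomial nonzero transition probabilities, hence inverse-polynomial conductance, and then the Cheeger inequality for the lazy chain. Your bookkeeping (the $p_{\min}=\Omega(1/n^{6})$ bound on a single swap move, the single-crossing-edge lower bound on any cut of the connected Markov graph, and the laziness remark ruling out eigenvalues near $-1$) is sound, so there is nothing to repair.
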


\medskip
Later on, we are going to compose a larger degree sequence on $n$ vertices from much smaller degree sequences, each on ${\cal O}(\sqrt{\log n})$ vertices. The following result will be useful in this direction:
\medskip
\begin{lemma}\label{th:kicsi}
Let $\bbd$ be a graphical bipartite degree sequence on $\sqrt{\log n}+\sqrt{\log n}$ vertices ($\sqrt{\log n}$ vertices on each side). Then the second largest eigenvalue $\lambda_2$ of the lazy swap Markov chain satisfies
\begin{equation}\label{eq:cheeger}
\frac{1}{1-\lambda_2} = O(n^2\log^4(n)).
\end{equation}
\end{lemma}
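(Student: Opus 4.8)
The plan is to turn the smallness of the vertex set into a polynomial bound on the whole state space, and then to estimate the relaxation time of the lazy walk on the connected realization graph $\bG(\bbd)$ by a direct conductance argument. The starting observation is purely enumerative: with only $\sqrt{\log n}$ vertices in each class, the number of chords joining the two classes is $\sqrt{\log n}\cdot\sqrt{\log n}=\log n$, so every realization is one of the at most $2^{\log n}$ subsets of these chords. Hence the number $N$ of vertices of $\bG(\bbd)$ satisfies $N\le 2^{\log n}\le n$. This already yields rapid mixing through Lemma~\ref{th:kicsi0}; the purpose of the present statement is the explicit bound (\ref{eq:cheeger}), which is what will be fed into the product/composition argument later.

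Next I would assemble the two ingredients of a Cheeger estimate for the lazy chain, whose stationary distribution $\pi$ is uniform with $\pi(\cdot)=1/N$. First, $\bG(\bbd)$ is connected, because the realizations of any bipartite degree sequence are linked by swaps; therefore every vertex cut of $\bG(\bbd)$ is crossed by at least one swap-edge. Second, I would bound from below the smallest positive transition probability $p_{\min}$. A swap is determined by choosing two vertices in each class, i.e.\ by one of the $R=\left(\frac{\sqrt{\log n}(\sqrt{\log n}-1)}{2}\right)^2<\frac{\log^2 n}{4}$ rectangles; this proposal distribution is state-independent, and a selected rectangle determines its unique admissible swap (if any). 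With the laziness factor $\frac12$, each swap-edge $(X,Y)$ therefore carries probability $P(X,Y)=\frac{1}{2R}\ge\frac{2}{\log^2 n}$, so $p_{\min}=\Omega(1/\log^2 n)$.

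Combining the two ingredients, for every cut $\emptyset\ne S\subsetneq V(\bG(\bbd))$ with $\pi(S)\le\frac12$ at least one crossing swap-edge contributes $\pi(X)P(X,Y)\ge p_{\min}/N$ to the ergodic flow out of $S$, whence its conductance is at least $(p_{\min}/N)/\pi(S)\ge 2p_{\min}/N$. Minimizing over $S$ gives $\Phi\ge 2p_{\min}/N\ge 4/(n\log^2 n)$. Since the chain is lazy its transition matrix has non-negative spectrum, so the one-sided Cheeger inequality $1-\lambda_2\ge\Phi^2/2$ is available and produces
$$
\frac{1}{1-\lambda_2}\le\frac{2}{\Phi^2}\le\frac{n^2\log^4 n}{8}=O\!\left(n^2\log^4 n\right),
$$
which is exactly (\ref{eq:cheeger}).

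I expect the whole argument to be routine once the state space is seen to be polynomially small; the only delicate points are bookkeeping, namely fixing the swap chain's normalization so that $p_{\min}$ is genuinely $\Omega(1/\log^2 n)$ (and not smaller by a hidden polynomial-in-$\log$ factor), and checking that laziness is exactly what legitimizes the one-sided Cheeger bound. The ``hard'' part is thus only to keep the exponent of $\log n$ equal to $4$ and to retain the factor $n^2$ rather than a larger power of $n$; both follow from the clean estimates $N\le n$ and $p_{\min}=\Omega(1/\log^2 n)$ above.
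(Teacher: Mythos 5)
Your proposal is correct and takes essentially the same route as the paper's own proof: you bound the number of realizations by $2^{\log n}\le n$, lower-bound the transition probabilities by $\Omega(1/\log^2 n)$, deduce a conductance bound of order $1/(n\log^2 n)$, and conclude via the Cheeger inequality for the lazy chain. Your write-up merely makes explicit the bookkeeping (the rectangle count $R<\log^2 n/4$, the single crossing swap-edge per cut, and the role of laziness in the one-sided Cheeger bound) that the paper's proof leaves implicit.
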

\begin{proof}
The number of possible labeled bipartite graphs on $k+k$ vertices is $2^{k^2}$. (Each labeled vertex pair may form an edge independently of others.) Therefore, the number of realizations of a given bipartite degree sequence is (much) less than $2^{k^2}$. When $k=\sqrt{\log(n)}$, then $2^{(k^2)} = n$. If a swap Markov chain contains $n$ vertices (here a vertex is a graphical realization), then the probability of any subset of states in the equilibrium distribution cannot be smaller than $\frac{1}{n}$. The transition probabilities are $\left[{O(log^2(n))}\right]^{-1}$, and thus the {\em conductance} is $\left[O(n \log^2(n))\right]^{-1}$. Equation (\ref{eq:cheeger}) then follows from the Cheeger inequality \chg{\cite{CHEEGER},\cite{LPW08}}.
\end{proof}

\medskip\noindent
As mentioned earlier, all the proofs in Theorem \ref{th:MCMCk} use Sinclair's multicommodity flow method and require a complex and technical reasoning. Another approach was used in \cite{MCMC-JDM} where the fast mixing nature of the Markov Chain under investigation was inferred from the fast mixing nature of several well known ``smaller'' chains. In other words that Markov Chain was decomposed into smaller Markov Chains with known ``good'' properties. This result will be crucial for our purposes and thus we quote it here:

\begin{theorem}[Erd\H{o}s, Mikl\'os, Toroczkai 2015, \cite{MCMC-JDM}]
\label{th:directproduct}
Let ${\cal M}$ be a class of lazy Markov chains whose state space is a $K$ dimensional direct product of spaces, and the problem size of a particular chain is denoted by $n$. Here $n$ is not bounded but we assume that $K=O(poly_1(n))$. We  also assume that
\begin{enumerate}[{\rm (1)}]
\item Any transition of the Markov chain $M \in {\cal M}$ changes only one coordinate (each coordinate with equal probability). The transition probabilities do not depend on the other coordinates.
\item The transitions on each coordinate form irreducible, aperiodic Markov chains (denoted by $M_1, M_2, \ldots M_K$), which are reversible with respect to their stationary distribution $\pi_i.$
\item Furthermore, each of $M_1, \ldots M_K$ are  rapidly mixing, i.e., with the relaxation time $\frac{1}{1-\lambda_{2,i}}$ being bounded by a $O(\mathrm{poly}_2 (n))$ for all $i$. {\rm (}As usual, $\lambda_{2}$ denotes the second largest eigenvalue of the corresponding chain.{\rm )}
\end{enumerate}
Then the Markov chain $M$ converges rapidly to the direct product of the $\pi_i$ distributions, and the second largest eigenvalue of $M$ is
$$
\lambda_{2,M} = \frac{K-1+\max_{i}\left\{ \lambda_{2,i}\right\}}{K}
$$
and thus the relaxation time of $M$ is also polynomially bounded:
$$
\frac{1}{1- \lambda_{2,M}} =
K \, O(\mathrm{poly}_2(n))=
O(\mathrm{poly}_1(n) \mathrm{poly}_2(n)). \qquad \Box
$$
\end{theorem}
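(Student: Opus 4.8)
The plan is to show that the transition matrix $P$ of $M$ is, up to the uniform coordinate-selection factor $1/K$, a sum of commuting operators acting on the separate factors, and then to read its entire spectrum off the spectra of the factor chains $M_1,\dots,M_K$. First I would fix notation: write the state space as $\Omega=\Omega_1\times\cdots\times\Omega_K$, let $P_i$ be the transition matrix of $M_i$ on $\Omega_i$, and let $\pi=\pi_1\otimes\cdots\otimes\pi_K$ be the candidate stationary distribution. By assumption (1) — each step changes exactly one coordinate, chosen with probability $1/K$, with a transition law independent of the other coordinates — the transition matrix decomposes as
\begin{equation*}
P=\frac{1}{K}\sum_{i=1}^{K}A_i,\qquad A_i=I\otimes\cdots\otimes P_i\otimes\cdots\otimes I,
\end{equation*}
where $P_i$ sits in the $i$-th slot and the identity acts on every other factor. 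A short detailed-balance computation, using reversibility of each $P_i$ with respect to $\pi_i$ (assumption (2)) together with the fact that two states differing in two or more coordinates are never adjacent, shows that $P$ is reversible with respect to $\pi$. Hence $P$ is self-adjoint on the weighted space $L^2(\pi)$, with real spectrum and an orthogonal eigenbasis.

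The key structural step is that $A_1,\dots,A_K$ pairwise commute, since $A_i$ and $A_j$ act nontrivially on disjoint factors, and each $A_i$ is self-adjoint on $L^2(\pi)$. A family of commuting self-adjoint operators is simultaneously diagonalizable, and the joint eigenvectors are exactly the tensor products $f_1\otimes\cdots\otimes f_K$, where $f_j$ ranges over an orthonormal eigenbasis of $P_j$ on $L^2(\pi_j)$. If $P_j f_j=\mu_j f_j$, then $A_i(f_1\otimes\cdots\otimes f_K)=\mu_i(f_1\otimes\cdots\otimes f_K)$, whence
\begin{equation*}
P\,(f_1\otimes\cdots\otimes f_K)=\Big(\frac{1}{K}\sum_{i=1}^{K}\mu_i\Big)(f_1\otimes\cdots\otimes f_K).
\end{equation*}
Thus every eigenvalue of $P$ is an average of one chosen eigenvalue from each factor chain.

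It then remains to identify the two largest eigenvalues. Each $P_i$ has top eigenvalue $1$ (the constant eigenvector) and second eigenvalue $\lambda_{2,i}<1$; since the chains are lazy, every eigenvalue of $P_i$ lies in $[0,1]$, so the spectral gap of $P$ is governed by its second-largest eigenvalue rather than by any negative one. The maximal average is $1$, attained by $\mu_i=1$ for all $i$ (the global stationary eigenvector). To obtain the next-largest average we must lower the sum as little as possible; since each $\mu_i\le 1$ and the best admissible non-unit value in factor $i$ is $\lambda_{2,i}$, the optimum replaces a single $\mu_i$ by $\lambda_{2,i}$ in the coordinate maximizing that value. This yields
\begin{equation*}
\lambda_{2,M}=\frac{(K-1)\cdot 1+\max_{i}\{\lambda_{2,i}\}}{K}=\frac{K-1+\max_{i}\{\lambda_{2,i}\}}{K}.
\end{equation*}
The relaxation time then simplifies, since $1-\lambda_{2,M}=\frac{1-\max_i\{\lambda_{2,i}\}}{K}$, to $\frac{1}{1-\lambda_{2,M}}=K\cdot\max_i\frac{1}{1-\lambda_{2,i}}$, and substituting $K=O(\mathrm{poly}_1(n))$ together with $\frac{1}{1-\lambda_{2,i}}=O(\mathrm{poly}_2(n))$ for every $i$ gives the claimed $O(\mathrm{poly}_1(n)\,\mathrm{poly}_2(n))$ bound.

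The main obstacle is not a hard estimate but making the product structure fully rigorous. One must justify that assumption (1) genuinely forces the clean decomposition $P=\frac{1}{K}\sum_i A_i$ with the \emph{same} factor matrix $P_i$ irrespective of the other coordinates — this is precisely where the hypothesis that the transition probabilities do not depend on the other coordinates is used — and one must argue that the optimization over the averaged eigenvalues is solved by perturbing exactly one coordinate, a step that relies on each $\mu_i\le 1$ and on laziness excluding large-magnitude negative eigenvalues.
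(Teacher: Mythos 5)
This theorem is stated in the paper without its proof --- it is imported verbatim from \cite[Theorem 5.1]{MCMC-JDM} (hence the $\Box$ closing the statement) --- and your argument reconstructs essentially the proof given in that source: decompose $P=\frac{1}{K}\sum_{i}A_i$ with $A_i=I\otimes\cdots\otimes P_i\otimes\cdots\otimes I$, diagonalize on the tensor-product eigenbasis so that the spectrum of $P$ consists exactly of the averages $\frac{1}{K}\sum_i\mu_i$ of factor eigenvalues, and identify the second-largest such average as $\frac{K-1+\max_i\{\lambda_{2,i}\}}{K}$, with laziness ensuring the relaxation time is governed by $\lambda_2$ rather than a negative eigenvalue. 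Your proof is correct, your placement of where assumption (1), irreducibility/aperiodicity (simplicity of the eigenvalue $1$ in each factor), and laziness are each used is accurate, and the approach matches the cited proof.
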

\hspace{-4pt}The important property to be checked is condition (1): whenever we make a move on $M$, the movement must be entirely within one of the factor spaces.

\section{Canonical (de)compositions of degree sequences}\label{sec:decomp}

In this section we first recall the notion of {\em canonical degree sequence decompositions} introduced by Tyshkevich in \cite{T80,T00}. We also review
some of the recent results on canonical decompositions introduced by Barrus \cite{B15} that are essential for this study.

A $G=(V,E)$ graph is a {\em split graph} if its vertices can be partitioned into a clique and an independent set.
Split graphs were introduced by F\"oldes and Hammer (\cite{FH77}).
We will use the notation $V=\langle U, W \rangle$ implying that $G[U]$ is a clique while $G[W]$ is the edge-less graph. Since it is important to specify which partition is on which side (especially for later purposes) this notation is ``non-commutative'', i.e., the elements are not interchangeable. Note that a split graph may have more than one partition into a clique and an independent set (for example, if a node in the clique has no edges to any of the nodes in the independent set, it can be moved to the latter). We will call $U$ the {\em primary class} and $W$ the {\em secondary class}. Either classes can be empty but not both, simultaneously.

Split graphs are recognizable from their degree sequences: from the Erd\H{o}s-Gallai theorem on degree sequences it follows that:
\begin{theorem}[Hammer and Simeone, 1981 \cite{HS}, Tyshkevich {\em et al.} \cite{T81}]\label{th:HS}
Assume $d(v_1) \ge \ldots \ge d(v_n)$  and let $m$ be the largest value of $i$, s.t $d(v_i)\ge i-1.$ Then $G$ is a split graph if and only if
$$
\sum_{i=1}^m d(v_i) = m(m-1) + \sum_{i=m+1}^n d(v_i).
$$
\end{theorem}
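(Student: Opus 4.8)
The plan is to reduce the whole statement to a single edge-counting identity. For any cut $T=\{v_1,\dots,v_k\}$ consisting of the $k$ highest-degree vertices, with complement $\bar T=\{v_{k+1},\dots,v_n\}$, I would count degree-endpoints inside $T$, inside $\bar T$, and across the cut to obtain
\[
\sum_{i=1}^{k} d(v_i) - k(k-1) - \sum_{i=k+1}^{n} d(v_i) \;=\; -2\,\overline{e}(T) - 2\,e(\bar T),
\]
where $e(\bar T)$ is the number of edges induced inside $\bar T$ and $\overline{e}(T)$ is the number of \emph{missing} edges inside $T$ (the pairs in $T$ that are non-adjacent). This identity reproves the relevant Erd\H{o}s--Gallai inequality at level $k$ — the right-hand side is $\le 0$ — and, more importantly, it isolates the equality case: the left-hand side vanishes if and only if $T$ induces a clique and $\bar T$ induces an independent set. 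Specializing to $k=m$, the ``if'' direction of the theorem is then immediate: if equality holds at $m$, then $T=\{v_1,\dots,v_m\}$ is a clique and its complement is independent, so $\langle T,\bar T\rangle$ is a split partition and $G$ is a split graph.

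The ``only if'' direction carries the real content, since one must show that for a split graph the \emph{numerically defined} index $m$ actually separates a clique from an independent set. I would start from a split partition $\langle K,I\rangle$ whose clique $K$ has \emph{maximum} size $s=|K|$ among all such partitions. Each vertex of $K$ has degree at least $s-1$ because it is adjacent to the other clique vertices, while each vertex of $I$ has degree at most $s$ because all its neighbours lie in $K$. The decisive refinement is that no independent vertex can have degree exactly $s$: such a vertex would be adjacent to all of $K$ and could be absorbed into the clique, contradicting maximality of $s$. Hence every independent vertex has degree $\le s-1$, every clique vertex has degree $\ge s-1$, and so after sorting by non-increasing degree (breaking ties in favour of clique vertices) one may take $T=K$ and $\bar T=I$ at level $k=s$; the identity then yields equality at level $s$.

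It remains to match $s$ with $m$, and this is where I expect the main obstacle to sit. Since $v_s\in K$ gives $d(v_s)\ge s-1$, the defining inequality $d(v_i)\ge i-1$ still holds at $i=s$, so $m\ge s$; conversely, for every $i>s$ the vertex $v_i$ lies in $I$ and satisfies $d(v_i)\le s-1<i-1$, so the defining inequality fails beyond $s$ and $m\le s$. Therefore $m=s$, and the equality established at level $s$ is precisely the asserted equality at level $m$. The genuinely delicate point is thus not the algebra but this matching step — showing that the Durfee-type threshold $m$ coincides with the structural clique size $s$ — and its heart is the maximality argument that forbids independent vertices of degree $s$. Everything else is elementary edge counting.
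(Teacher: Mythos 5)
Your proof is correct, and it is more self-contained than what the paper offers: the paper gives no proof at all, merely remarking that the theorem ``follows from the Erd\H{o}s--Gallai theorem'' and citing Hammer--Simeone and Tyshkevich et al. Your counting identity
\[
\sum_{i=1}^{k} d(v_i) - k(k-1) - \sum_{i=k+1}^{n} d(v_i) \;=\; -2\,\overline{e}(T) - 2\,e(\bar T)
\]
is easily verified ($\sum_{i\le k} d(v_i)=2e(T)+e(T,\bar T)$ and $\sum_{i>k} d(v_i)=2e(\bar T)+e(T,\bar T)$, so the left side is $2e(T)-2\binom{k}{2}-2e(\bar T)$), and it is in effect the Hammer--Simeone \emph{splittance} identity evaluated at the cut $T$: it simultaneously reproves the weak Erd\H{o}s--Gallai inequality at level $k$ (the full inequality has $\min(d(v_i),k)$ on the right, but the weak form is all that is needed here) and pins down the equality case, which the paper's one-line appeal to Erd\H{o}s--Gallai leaves implicit. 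Your ``only if'' direction is also sound, and you correctly identified where the content lies: the absorption argument (an independent vertex of degree $s$ would be adjacent to all of the maximum clique $K$, contradicting maximality of $s$) gives $d(v)\le s-1$ on $I$ and $d(v)\ge s-1$ on $K$, so a tie-breaking order places $K$ in positions $1,\dots,s$, and the matching $m=s$ follows since $d(v_s)\ge s-1$ while $d(v_i)\le s-1<i-1$ for $i>s$. The implicit step that the quantities involved ($m$ and the partial degree sums) depend only on the sorted degree multiset, hence are insensitive to your tie-breaking, is valid and worth the one sentence you spent on it. In short: what the paper buys by citation, you buy with an elementary two-line count plus the standard maximum-clique absorption lemma, and nothing in your argument needs repair.
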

\begin{remark}\label{th:name}
Based on this theorem it is clear that if a degree sequence $\bd$ can be realized as a split graph, then all realizations of $\bd$ are split graphs as well.
\end{remark}

\noindent
Therefore such degree sequence is called  {\em split degree sequence}.

\smallskip
Recall that any two realizations of a degree sequence are connected by a series of swap operations. Now, in a split graph the only edge pairs that can be used for such swaps are those between $U$ and $W.$ (Involving other edges would lead to multiple edges between some node pairs after the swap.) Thus, the resulting edge pair will also be running between $U$ and $W.$ This gives a second proof for Remark \ref{th:name}.

As a consequence, one can write the degree sequence in the form of $(\bu, \bw)$ where both vectors are in non-increasing order. In the following we will use the same notational expression $\langle U,W \rangle$ for our split graphs as well.

Let $(\langle U, W\rangle ; E)$ be a split graph and $\G$ an arbitrary graph. Following Tyshkevich, we define the  {\em composition} graph $\h=(\langle U, W\rangle ; E) \circ \G$ as follows: $\h$ consists of a copy of $(\langle U, W\rangle ; E)$ and a copy of $\G$ and of all the possible new edges $(u,x)$ where $u \in U, x \in V(\G).$ (The first operand in this notation is always a split graph.) Note that the composition operation above is {\em non-commutative}. The degree sequence of the composition graph is therefore:
\begin{equation}\label{eq:compsec}
 \bd(U) \oplus |V(G)|, \bd(W), \bd(V(\G))\oplus |U|
\end{equation}
where $(\bd \,\oplus\,c)$ denotes an operation in which every component of a vector $\bd$ is increased by the amount $c$. Therefore, we have also defined the composition operation between a split degree sequence and a general degree sequence.

It is easy to see that if $\G$ is a split graph $(\langle X, Y \rangle ; F)$, then the result of the composition with a split graph  is also a split graph
$\left (\langle U \cup X, W \cup Y \rangle ; E\cup F \cup E(K_{ U,X \cup Y}) \right ).$
Here we used the notation $K_{U,X \cup Y}$ which is the complete bipartite graph with vertex classes $U$ and $X \cup Y.$

The graph $\G$ is {\em decomposable} if there exist a split graph $(\langle U,W\rangle ; E)$ and a graph $\h$ such that the composition of these components $(\langle U,W\rangle ; E)\circ \h$ is $=\G.$ As the following result shows the decomposability is a property of the degree sequence rather than that of the graph itself.
\begin{theorem}[Tyshkevich \cite{T00}, \chg{Theorem 2, pp. 208-211}]\label{th:T}
{\rm (I)} The graph $\G$ with non-increasing degree $n$-sequence is decomposable iff $\exists p, q$ \chg{non-negative} integers s.t.
\begin{equation}\label{eq:decomp}
0< p+q < n, \quad \sum_{i=1}^p d_i =p(n -q -1) + \sum_{i=n-q+1}^n d_i.
\end{equation}
{\rm (II)} Call a pair $(p,q)$ satisfying condition \chg{(\ref{eq:decomp})} {\em good}. \chg{To every good pair $(p; q)$ we can associate} the decomposition
$(\langle U,W\rangle ; E)\circ \h =\G$ where $(d_1,\ldots, d_p); (d_{p+1},\ldots, d_{n-q})$ and $(d_{n-q+1},\ldots, d_n)$ are the degree sequences in $U, V(\h)$ and $W$ respectively. Moreover, every such decomposition is associated with some good pair.\\
{\rm (III)} Let $p_0$ be the minimum first component of the good pairs. Let $q_0= |\{i: \chg{d_i}< p\}|$ if $p_0\ne 0$ and $q_0=1$ otherwise. Then $(\langle U,W\rangle ; E)$ is indecomposable if and only if the associated good pair is $(p_0,q_0)$.
\end{theorem}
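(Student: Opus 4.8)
The plan is to reduce parts (I) and (II) to a single edge-counting identity sharpened into a tightness argument, and to treat the minimality claim (III) separately. For disjoint vertex sets $A,B$, write $e(A)$ for the number of edges inside $A$ and $e(A,B)$ for the number between $A$ and $B$. For any partition of the $n$ vertices into a top block $U$ of size $p$, a middle block $V(\h)$ of size $n-p-q$, and a bottom block $W$ of size $q$, summing degrees over $U$ and over $W$ and cancelling the common $U$--$W$ contribution gives
\[
\sum_{i=1}^p d_i - \sum_{i=n-q+1}^n d_i = 2e(U) - 2e(W) + e(U,V(\h)) - e(W,V(\h)).
\]
Using $e(U)\le p(p-1)/2$ and $e(U,V(\h))\le p(n-p-q)$ together with $e(W),\,e(W,V(\h))\ge 0$, this is at most $p(p-1)+p(n-p-q)=p(n-q-1)$.

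This bound drives both directions of (I). For sufficiency, a good pair forces equality in (\ref{eq:decomp}), hence equality in all four estimates simultaneously: $U$ is a clique, $W$ is independent, $U$ is completely joined to $V(\h)$, and no edge meets $W$ and $V(\h)$; leaving the $U$--$W$ edges and the edges inside $V(\h)$ unconstrained, this says exactly $\G=(\langle U,W\rangle;E)\circ\h$. For necessity, an arbitrary decomposition forces the degree ranges $d(u)\ge n-q-1$ on $U$, $d(x)\in[p,\,n-q-1]$ on the middle, and $d(w)\le p$ on $W$; thus $U$ is a maximum-sum $p$-subset and $W$ a minimum-sum $q$-subset, so their degree sums equal the sorted prefix and suffix (boundary ties are harmless since these sums are unambiguous), and the structural equalities make the bound tight, which is (\ref{eq:decomp}). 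Since (\ref{eq:decomp}) involves only the sorted sequence, this also shows that decomposability depends on the degree sequence alone.

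Part (II) is then bookkeeping: sufficiency already produces the decomposition attached to a good pair (top $p$ as $U$, bottom $q$ as $W$, the rest as $V(\h)$), and necessity shows every decomposition arises this way with these three blocks. I would check that $p=0$ (the condition forcing the bottom $q$ degrees to vanish) and $q=0$ (forcing the top $p$ degrees to equal $n-1$) fit the same formula, and that $0<p+q<n$ just records that both the split operand and $\h$ are nonempty.

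The real obstacle is (III). Here I would first show the good pairs of $\G$ form a chain: two decompositions are compatible by associativity of $\circ$, forcing their clique blocks and secondary blocks to be nested, so both coordinates increase together and the minimal pair $(p_0,q_0)$ is the innermost cut. For this minimal pair every secondary vertex has degree $<p_0$---a secondary vertex adjacent to all of $U$ (degree exactly $p_0$) could be pushed into the middle, yielding a still smaller cut and contradicting minimality---while every clique or middle vertex has degree $\ge p_0$; this pins $q_0=|\{i:d_i<p_0\}|$, with the degenerate $p_0=0$ giving $q_0=1$. Finally, the operand of a good pair is decomposable exactly when a strictly smaller good pair exists, so it is indecomposable iff its pair is $(p_0,q_0)$. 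The delicate parts, which I expect to consume most of the work, are establishing the chain structure of the good pairs and ruling out the degree-$p_0$ boundary vertices from the minimal secondary class.
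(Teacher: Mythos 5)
You should first note the point of comparison: the paper contains no proof of this theorem at all --- it is quoted from Tyshkevich \cite{T00} (Theorem 2, pp.\ 208--211) --- so your attempt has to stand on its own. Your parts (I) and (II) do: the identity $\sum_{U} d - \sum_{W} d = 2e(U)-2e(W)+e(U,V(\h))-e(W,V(\h))$ is correct, the four estimates are the right ones, equality does force clique, complete join, independence and non-adjacency simultaneously, and your remark that ties at the degree thresholds $n-q-1$ and $p$ are harmless because only block \emph{sums} enter is exactly the observation needed (degrees in $U$ are $\ge n-q-1$, middle degrees lie in $[p,n-q-1]$, degrees in $W$ are $\le p$, and $p\le n-q-1$ since $p+q<n$). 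The degenerate cases $p=0$ (bottom degrees vanish) and $q=0$ (top degrees equal $n-1$) work as you say. You also correctly read the statement's $q_0=|\{i: d_i<p\}|$ as $d_i<p_0$, a transcription slip in the paper.

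The genuine gap is in (III). First, your chain structure cannot be obtained ``by associativity of $\circ$'': compatibility of two decompositions is in substance the uniqueness of the canonical decomposition, which in this paper is Corollary \ref{th:T1}, \emph{deduced from} Theorem \ref{th:T} --- so as written your argument is circular. It is rescuable directly: if $(p,q)$ and $(p',q')$ are good with $q>q'$, the vertex in sorted position $n-q+1$ has degree $\le p$ (it is secondary for $(p,q)$) and degree $\ge p'$ (it is primary or middle for $(p',q')$), forcing $p'\le p$; hence any two good pairs are componentwise comparable. Second, your pushing argument excluding degree-$p_0$ secondary vertices is sound, but it contradicts minimality of $q$ at \emph{fixed} first coordinate $p_0$, not minimality of $p_0$; harmless once the chain structure is in place, but the contradiction is aimed at the wrong coordinate as stated. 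Third, and most seriously, the pivotal equivalence ``the operand is decomposable exactly when a strictly smaller good pair exists'' is asserted, not proved. The easy direction (a strictly smaller good pair $(p_0,q_0)$ nests inside $(p,q)$ and splits the operand as $S=S_0\circ S''$, after checking that $U_0$ is completely joined to $W\setminus W_0$ and that $W_0$ has no neighbors in $U\setminus U_0$) you never state; the hard direction --- that an arbitrary decomposition $S=T\circ \h'$ of the operand produces a smaller good pair of $\G$ --- requires aligning the split partition of $T$ with the designated classes, i.e.\ showing the primary class of $T$ can be taken inside $U$ and its secondary class inside $W$. Split partitions are not unique, and boundary vertices of degree $p_0$ or $p_0-1$ within $S$ can interleave in the sorted order of $S$ even though they do not interleave in $\G$; ruling this out (or normalizing the partition before transferring the good pair) is precisely where Tyshkevich's pp.\ 208--211 do their work, and your proposal, which flags this step as delicate but offers no mechanism for it, leaves it open.
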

\begin{corollary}[Graph decomposition theorem, Tyshkevich \cite{T00}]\label{th:T1}
Every graph $\G$ can be uniquely decomposed (up to isomorphism) into the form
\begin{equation}\label{eq:decomp1}
\G=(\langle U_1, W_1\rangle; E_1) \circ \cdots \circ (\langle U_\ell , W_\ell \rangle; E_\ell)\circ \G_0
\end{equation}
where each split graph and the non-split simple graph $\G_0$ (if it exists) are indecomposable. The composition operation is associative but not commutative.
\end{corollary}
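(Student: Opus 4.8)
The plan is to establish existence by repeatedly peeling off the canonical indecomposable split factor furnished by Theorem \ref{th:T}, and then to obtain uniqueness by showing that this factor is forced at every stage. First I would prove existence by induction on $n=|V(\G)|$. If $\G$ is indecomposable there is nothing to do: either $\G$ is non-split, in which case it equals $\G_0$ with $\ell=0$, or it is an indecomposable split graph, which we record as the single factor $A_1=\G$ with no $\G_0$ (using that $(\langle U,W\rangle;E)\circ\emptyset=(\langle U,W\rangle;E)$, so composing with the empty graph costs nothing). If $\G$ is decomposable, Theorem \ref{th:T}(III) singles out the minimal good pair $(p_0,q_0)$ and its associated decomposition $\G=(\langle U_1,W_1\rangle;E_1)\circ\h$ whose split factor is indecomposable; here $|U_1|=p_0$, $|W_1|=q_0$, and $\h$ has strictly fewer vertices, so the induction hypothesis decomposes $\h$. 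Prepending the extracted factor yields a decomposition of $\G$ of the form (\ref{eq:decomp1}).

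For uniqueness I would show that the leftmost factor is canonically determined. Suppose $\G=A_1\circ\cdots\circ A_\ell\circ\G_0=B_1\circ\cdots\circ B_m\circ\h_0$ are two decompositions of the required shape. Grouping the tails, $\G=A_1\circ\h$ and $\G=B_1\circ\h'$ each exhibit $\G$ as the composition of an indecomposable split graph with a further graph. By Theorem \ref{th:T}(II) each such decomposition is associated with a good pair, namely $(|U(A_1)|,|W(A_1)|)$ respectively $(|U(B_1)|,|W(B_1)|)$; and by part (III) the indecomposability of the leading split factor forces both good pairs to equal the minimal pair $(p_0,q_0)$. Hence $A_1$ and $B_1$ occupy the same positions in the non-increasingly sorted degree sequence, the top $p_0$ degrees forming the primary class and the bottom $q_0$ degrees the secondary class. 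Since, by the definition of $\circ$ underlying formula (\ref{eq:compsec}), the composition adds no edges inside the middle block, $\h=\G[V(\h)]$ is recovered as an induced subgraph and $A_1$ is the induced split graph on the first $p_0$ and last $q_0$ vertices. Thus $A_1=B_1$ and $\h\cong\h'$ up to isomorphism, and induction on the number of vertices gives $\ell=m$, $A_i\cong B_i$ for all $i$, and $\G_0\cong\h_0$.

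Associativity and non-commutativity I would check directly from the definition of $\circ$. Non-commutativity is immediate, since the primary class of the first operand is joined to all of the second operand but not conversely. For associativity, writing out the edge sets of $(A\circ B)\circ C$ and of $A\circ(B\circ C)$ for split graphs $A,B$ and an arbitrary graph $C$, one finds both consist of exactly the same edges: the internal edges of $A$, $B$ and $C$; the clique and split edges internal to $A$ and to $B$; all edges from $U(A)$ to $V(B)\cup V(C)$; and all edges from $U(B)$ to $V(C)$; while $W(A)$ and $W(B)$ acquire no further neighbours on the right. Since the two edge sets coincide, $\circ$ is associative. The main obstacle is the uniqueness step, and specifically the precise use of Theorem \ref{th:T}(III): one must verify that \emph{indecomposable split factor} is equivalent to \emph{good pair} $=(p_0,q_0)$, and that a good pair rigidly fixes both the vertex partition through the monotone degree order and the induced edges, so that no two genuinely different indecomposable split graphs can head the same $\G$.
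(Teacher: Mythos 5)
Your proposal is correct and follows essentially the route the paper intends: the paper states this corollary without its own proof (deferring to Tyshkevich \cite{T00}), and your argument --- existence by inductively peeling off the indecomposable split factor associated with the minimal good pair $(p_0,q_0)$ from Theorem \ref{th:T}(III), uniqueness by using parts (II)--(III) to force the leading factor of any decomposition of the required shape, plus the direct edge-set verification of associativity and non-commutativity --- is precisely the standard derivation of the corollary from Theorem \ref{th:T}. The one point you leave implicit and rightly flag as the main obstacle, namely that degree ties at the boundaries between $U$, the middle block, and $W$ could permit different vertex choices for a fixed good pair, does resolve favorably: such ties can only exchange twin vertices (same neighborhoods apart from each other), so the extracted factor is determined up to isomorphism, which is all the statement claims.
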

Since the composition of graphs corresponds to the composition of degree sequences, the above can be reworded to statements involving degree sequences only. \chg{Some examples of (de)compositions are found in \cite{BW}-\cite{BD15} and \cite{T00}.}

\medskip\noindent
The next two statements play crucial roles later on. In the first, we use a slightly different wording than the original result:
\begin{lemma}[Barrus and West \cite{BW}]\label{th:component-wise}
\begin{enumerate}[{\rm (i)}]
\item In the composition graph $(\langle U,W\rangle; E) \circ \G$ any swap operation belongs completely to exactly one component. In other words, all four participating vertices are within $U\cup W$ or in $V(\G).$
\item Any possible swap operation in an arbitrary simple graph $\G$ is within exactly one component of its canonical decomposition.
\end{enumerate}
\end{lemma}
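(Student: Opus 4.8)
The plan is to prove part (i) first by a direct structural analysis of the composition graph, and then to obtain part (ii) from it by induction along the canonical decomposition. Throughout I will use the dictionary between swaps and \emph{alternating $4$-cycles}: a swap $ac,bd\Rightarrow bc,ad$ is exactly the data of four distinct vertices $a,b,c,d$ for which $(a,c),(b,d)$ are edges while $(a,d),(b,c)$ are non-edges. The feature of the composition $(\langle U,W\rangle;E)\circ\G$ that I will exploit is the rigid adjacency pattern across the two blocks $U\cup W$ and $V(\G)$: every vertex of $U$ is joined to \emph{all} of $V(\G)$, no vertex of $W$ is joined to \emph{any} vertex of $V(\G)$, the set $U$ induces a clique, and $W$ induces an independent set. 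Thus for $s\in U\cup W$ and $x\in V(\G)$ the pair $(s,x)$ is an edge precisely when $s\in U$ and a non-edge precisely when $s\in W$, independently of which $x$ is chosen.

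For (i) I would first record a convenient symmetry: the four relabelings $\mathrm{id}$, $(a\,b)(c\,d)$, $(a\,c)(b\,d)$, $(a\,d)(b\,c)$ all preserve the swap configuration (they permute the two edges among themselves and the two non-edges among themselves), and together they act transitively on $\{a,b,c,d\}$. Hence, if at least one of the four vertices lies in $V(\G)$, I may relabel so that it is $a$; the complementary case, in which none of the four lies in $V(\G)$, already places all of them in $U\cup W$ and is done. So assume $a\in V(\G)$ and chase the constraints. Since $(a,c)$ is an edge, the rule above forbids $c\in W$, so $c\in U$ or $c\in V(\G)$; since $(a,d)$ is a non-edge, it forbids $d\in U$, so $d\in W$ or $d\in V(\G)$. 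If $c\in U$, then the non-edge $(b,c)$ forces $b\notin U$ (clique) and $b\notin V(\G)$ (join), whence $b\in W$, and then the edge $(b,d)$ forces $d\in U$, contradicting $d\notin U$; therefore $c\in V(\G)$. If $d\in W$, then the edge $(b,d)$ forces $b\in U$, and then the non-edge $(b,c)$ with $c\in V(\G)$ is an edge by the join, a contradiction; therefore $d\in V(\G)$. Finally the edge $(b,d)$ forbids $b\in W$ and the non-edge $(b,c)$ forbids $b\in U$, so $b\in V(\G)$. All four vertices lie in $V(\G)$; since the two blocks partition the vertex set, this yields ``exactly one component.''

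Part (ii) I would then get by induction on the number $\ell$ of split factors in the canonical decomposition $\G=(\langle U_1,W_1\rangle;E_1)\circ\cdots\circ(\langle U_\ell,W_\ell\rangle;E_\ell)\circ\G_0$ of Corollary~\ref{th:T1}. Using associativity I write $\G=(\langle U_1,W_1\rangle;E_1)\circ\G'$, where $\G'$ is the composition of the remaining factors; part (i) then splits any swap into one lying entirely inside $U_1\cup W_1$ (done) or one lying entirely inside $V(\G')$, to which the induction hypothesis applies. The base case $\ell=0$ is immediate.

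The hard part is (i), specifically making the chase airtight: one must be sure that \emph{every} way a putative swap could straddle the two blocks is excluded, using each of the four rigidity facts (clique, independent set, complete join, empty join) exactly where needed. The symmetry reduction is what keeps this from ballooning into a full enumeration of how an alternating $4$-cycle can cross the block boundary; without it one would instead count the (necessarily even) number of boundary crossings and rule out the $2$- and $4$-crossing patterns one by one. I expect no difficulty in (ii) beyond correctly invoking associativity so that part (i) applies at each peeling step.
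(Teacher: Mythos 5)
Your proof is correct and takes essentially the same route as the paper: its part (i) is exactly the paper's one-line observation that the complete join between $U$ and $V(\G)$, the clique on $U$, and the absence of edges between $W$ and $V(\G)$ preclude any swap straddling the two blocks -- you merely make that rigidity argument airtight via the Klein-four symmetry reduction and an explicit case chase -- and your part (ii) is the same ``simple induction'' (peeling off the first split factor by associativity) that the paper invokes. No gaps.
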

\noindent For (i), if at least one vertex is from $G$ and at least one
from $U\cup W$ then there will not be a valid swap due to \chg{$K_{U,V(\G)}$, $K_{U}$} and the fact that there are no edges between $\G$ and $W$. Statement (ii) follows by simple induction. This implies that if we perform a swap operation, then we can identify the component where the swap actually happened.
\begin{theorem}[Barrus 2015 {\cite[Theorem 6]{B15}}]\label{th:direct}
Let $S=(\langle U,W\rangle ; E )$ be a split graph and $\G$ an arbitrary graph with degree sequences $\bd(S)$ and $\bd(\G)$. Then the Markov graph of the composition degree sequence (\ref{eq:compsec}) is the Cartesian (also called box- or direct-) product of the two original Markov graphs:
$$
\bG \Big (\bd(S) \circ \bd(\G) \Big ) = \bG(\bd(S)) \ \square \ \bG(\bd(\G)).
$$
\end{theorem}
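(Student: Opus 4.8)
The plan is to exhibit an explicit graph isomorphism
$$\Phi:\ \bG(\bd(S))\ \square\ \bG(\bd(\G))\ \longrightarrow\ \bG\bigl(\bd(S)\circ\bd(\G)\bigr),\qquad \Phi(S',\G')=S'\circ\G',$$
and to verify that it is a bijection on vertices and that it preserves adjacency in both directions. The two facts doing the real work are Lemma~\ref{th:component-wise} (swaps in a composition live in a single component) for the edges, and Tyshkevich's decomposition machinery (Theorem~\ref{th:T}) for the vertices. Throughout I regard the underlying labeled vertex set as fixed and partitioned as $V=U\cup V(\G)\cup W$, so that the three realization graphs are taken over these fixed label classes.

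First I would settle the vertex bijection. That $\Phi$ is well defined is immediate: if $S'$ realizes $\bd(S)$ (hence is a split graph with clique $U$, by Remark~\ref{th:name}) and $\G'$ realizes $\bd(\G)$, then $S'\circ\G'$ has degree sequence exactly (\ref{eq:compsec}). Injectivity is equally direct, since the composition always contains the complete bipartite graph between $U$ and $V(\G)$ and never any $W$--$V(\G)$ edge; hence the edges inside $U\cup W$ recover $S'$ and the edges inside $V(\G)$ recover $\G'$. The one substantive point is \emph{surjectivity}: I must show that \emph{every} realization $H$ of the composition degree sequence is of the form $S'\circ\G'$. For this I would check that $(p,q)=(|U|,|W|)$ is a good pair in the sense of (\ref{eq:decomp}); substituting the degrees of (\ref{eq:compsec}) collapses the good-pair identity precisely to the split relation $\sum_{u\in U}d_S(u)=|U|(|U|-1)+\sum_{w\in W}d_S(w)$, which holds by counting edges in $S$ since $U$ is a clique. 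Goodness depends only on the degree sequence, so $(|U|,|W|)$ is good for every realization $H$; Theorem~\ref{th:T}(II) then decomposes $H$ as $S'\circ\G'$, and comparing degrees with (\ref{eq:compsec}) forces $\bd(S')=\bd(S)$ and $\bd(\G')=\bd(\G)$.

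Next I would match the edges. For the easy direction, if two product-vertices are adjacent they agree in one coordinate and differ by a single swap in the other; say $S_1'=S_2'=S'$ while $\G_1'$ and $\G_2'$ differ by a swap on four vertices of $V(\G)$. That swap touches only edges inside $V(\G)$, whereas the composition edges $U\times V(\G)$ are complete in both $S'\circ\G_1'$ and $S'\circ\G_2'$ and hence untouched; therefore the two compositions differ by exactly one valid swap, i.e.\ are adjacent in the Markov graph (and symmetrically when the $S$-coordinate moves). For the converse, suppose $H_1=S_1'\circ\G_1'$ and $H_2=S_2'\circ\G_2'$ differ by one swap. By Lemma~\ref{th:component-wise}(i) that swap is entirely inside $U\cup W$ or entirely inside $V(\G)$: in the first case $\G_1'=\G_2'$ while $S_1'$, $S_2'$ differ by a single swap, in the second case $S_1'=S_2'$ while $\G_1'$, $\G_2'$ differ by a single swap. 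Either way $\Phi^{-1}(H_1)$ and $\Phi^{-1}(H_2)$ agree in one coordinate and are swap-adjacent in the other, which is exactly Cartesian-product adjacency. Combined with the vertex bijection this yields the claimed isomorphism.

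I expect the main obstacle to be surjectivity --- convincing oneself that no ``exotic'' realization of (\ref{eq:compsec}) evades the composition form, especially when degrees tie across the $U$/$V(\G)$ or $V(\G)$/$W$ boundaries so that the sorted order underlying (\ref{eq:decomp}) is ambiguous. The good-pair computation above is what removes this worry, since it certifies the decomposition for every realization directly from the degree sequence rather than from any particular sorting. By contrast the edge correspondence is essentially handed to us: once surjectivity is in place, Lemma~\ref{th:component-wise} makes both adjacency directions routine, with the only thing to check being that a swap confined to one class remains a \emph{valid} swap after the complete $U\times V(\G)$ join is superimposed --- which it does, because that join adds no edges within either class.
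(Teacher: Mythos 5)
Your proposal is correct, but note that the paper itself contains no proof of this statement: it is imported verbatim as Barrus's result \cite[Theorem 6]{B15}, and the paper only ever uses it as a black box (together with Lemma~\ref{th:component-wise}) to drive the Meta-theorem. What you have written is essentially a reconstruction of Barrus's own argument: the vertex bijection $\Phi(S',\G')=S'\circ\G'$ rests on Tyshkevich's machinery (your good-pair computation with $(p,q)=(|U|,|W|)$, which reduces correctly to the split identity $\sum_{u\in U}d_S(u)=|U|(|U|-1)+\sum_{w\in W}d_S(w)$ via the degrees in (\ref{eq:compsec})), and the edge correspondence rests on the component-locality of swaps, exactly the division of labor in \cite{B15} and \cite{BW}. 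Both directions of your adjacency argument check out, including the point that a swap confined to one class remains valid after the $K_{U,V(\G)}$ join is superimposed, since the join adds no edges inside either class.

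One step deserves more care than you give it, though it is repairable and you correctly sensed where the danger lies. In the labeled setting of this paper ($\bG$ is built over a fixed labeled vertex set with prescribed per-vertex degrees), Theorem~\ref{th:T}(II) hands you a decomposition of an arbitrary realization $H$ whose three parts have the correct degree \emph{multisets}, but when degrees tie across the $U$/$V(\G)$ boundary (a $U$-label of degree $|U|-1+|V(\G)|$ versus a $V(\G)$-label of degree $|U|+|V(\G)|-1$) or across the $V(\G)$/$W$ boundary, the clique part $A$ produced by the theorem need not coincide with the label class $U$. Saying that goodness ``depends only on the degree sequence'' certifies existence of \emph{a} decomposition, not its alignment with the fixed labels, which is what surjectivity of $\Phi$ actually requires. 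The fix is a short exchange argument: if $x\in A\cap V(\G)$ and $u\in M\cap U$ have equal degree, then equality of $\deg(x)=|A|-1+|M|+|N(x)\cap B|$ and $\deg(u)=|A|+\deg_M(u)$ forces $N(x)\cap B=\emptyset$ and $\deg_M(u)=|M|-1$, so swapping $x$ and $u$ between $A$ and $M$ yields another valid decomposition; the symmetric computation handles ties at the $B$ boundary, and iterating aligns the decomposition with $U$, $V(\G)$, $W$. With that inserted, your surjectivity claim, and hence the whole isomorphism, is sound.
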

\hspace{-4pt}Recall that the Cartesian product of two graphs is a graph on $V(G) \times V(H)$ where $(u,u')$ and $(v,v')$ is adjacent iff $u=v$ and $u'v' \in E(H)$ or $u'=v'$ and $uv \in E(G).$ The central theme of our paper is the following:

\medskip\noindent
{\sc Meta-theorem}: {\em If the components of the canonical decomposition of a degree sequence $\bd$ have fast mixing MCMC sampling processes, then the same applies for $\bd.$}
\begin{proof}
By Lemma \ref{th:component-wise} and Theorem \ref{th:direct} our Theorem \ref{th:directproduct} clearly applies.
\end{proof}

\medskip\noindent
There are essentially two ways to use this approach. On one hand it is possible to seek out the canonical decomposition of the degree sequence under investigation and apply the meta-theorem whenever  possible. However, on the other hand, it seems to be much more powerful to build up good degree sequences from already studied good degree sequence classes. ``Good'' sequence here means that the swap MCMC mixes fast over its realizations. The simplest one is to take several (at most polynomially many) regular split graphs and one ``good'' simple graph (such as an almost regular one or one with low maximal degrees, i.e., condition (D) of Theorem \ref{th:MCMCk}) and take their composition. This construction alone significantly multiplies the number of degree sequences with fast mixing MCMC sampling processes.

In the next Section we expand the split graph (de)composition approach to bipartite and directed graphs.

\section{Canonical decomposition of bipartite and directed degree sequences}\label{sec:bipartite}

There is a natural correspondence between split graphs and bipartite graphs, which will be heavily exploited here.
A split graph $(\langle U , W \rangle ; E)$ naturally generates a bipartite graph as the edge-induced subgraph by
the edges between the sets $U$ and $W$. We will refer to bipartite graphs generated this way as  ``splitted"
bipartite graphs, although they are not split graphs in general (at least not with the same primary and secondary
sets of vertices). For convenience we will use fracture letters for splitted bipartite graphs and splitted bipartite degree sequences.
\begin{definition}
A {\em splitted bipartite graph} $\mathfrak{B} = (\langle \mfU , \mfW \rangle ; \mfE)$ is a bipartite graph with the same vertex partitions as the {\em primary} $\mfU$ and {\em secondary} $\mfW$ vertex classes of the corresponding split graph. \chg{The edge set $\mfE$ consists of all edges from $E$ which are between the vertex classes.} In the primary class of $\mathfrak{B}$ there may be vertices of zero degree. The {\em splitted bipartite degree sequence} $\bbd \langle \mfu , \mfw \rangle$ is defined analogously. Consequently $\mfu$ may contain zeros.
\end{definition}
\begin{lemma}\label{th:biject}
There exists a natural one-to-one correspondence $\Psi$  among split graphs and splitted bipartite graphs. Similarly, there is a natural bijection between split degree sequences and splitted bipartite degree sequences.
\end{lemma}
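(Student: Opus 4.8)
The plan is to exhibit $\Psi$ explicitly at the level of graphs and then push it down to degree sequences by tracking a uniform degree shift on the primary class.

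First I would define the forward map. Given a split graph $(\langle U,W\rangle; E)$, the subgraph on $U$ is a clique and $W$ is independent, so $E$ decomposes as the disjoint union $E = E(K_U) \sqcup \mfE$, where $\mfE$ collects exactly the edges running between $U$ and $W$. Set $\Psi(\langle U,W\rangle; E) := (\langle U,W\rangle; \mfE)$; this is a splitted bipartite graph with $\mfU=U$, $\mfW=W$ by the definition above. For the inverse, given a splitted bipartite graph $(\langle \mfU,\mfW\rangle; \mfE)$ I would adjoin all clique edges on the primary class, i.e. return $(\langle \mfU,\mfW\rangle; \mfE \cup E(K_{\mfU}))$. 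Checking that these two maps are mutually inverse is routine: reinserting and then deleting $E(K_{\mfU})$ returns $\mfE$, because the adjoined edges lie inside $\mfU$ and hence are never counted among the cross edges, while $E(K_U)\sqcup\mfE = E$ recovers the original split graph. This establishes the first, graph-level bijection.

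Next I would descend to degree sequences. Writing $k=|U|=|\mfU|$, every clique vertex $u\in U$ has split-degree $(k-1)+\deg_{\mfE}(u)$, while every $w\in W$ keeps its degree $\deg_{\mfE}(w)$, since $W$ is independent. Hence $\Psi$ induces the correspondence $\langle \mfu,\mfw\rangle \mapsto (\bu,\bw)$ with $\bu=\mfu\oplus(k-1)$ and $\bw=\mfw$, the inverse subtracting $k-1$ from the primary entries only. This shift is exactly the device needed: because a clique vertex has degree at least $k-1$, subtracting $k-1$ always yields a nonnegative primary sequence, and conversely the zeros permitted in $\mfu$ correspond precisely to clique vertices with no neighbor in $W$. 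Since $\Psi$ is a bijection on graphs that is degree-preserving up to this fixed shift, graphical split degree sequences match graphical splitted bipartite degree sequences, so the induced map on sequences is well defined and bijective, as claimed.

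The only genuinely delicate point is bookkeeping the partition and the shift consistently. By Remark~\ref{th:name} being split is a property of the whole degree sequence, so no realization-dependent ambiguity arises; and the non-commutative notation $\langle\cdot,\cdot\rangle$ fixes which class is primary, so $k$ is determined and the $\oplus(k-1)$ shift is unambiguous. I expect the remaining verifications (that $\mfE$ is genuinely bipartite across $\langle\mfU,\mfW\rangle$, and that no cross edge is lost or created under $\Psi$ and $\Psi^{-1}$) to follow immediately from the clique/independent-set structure rather than to be the crux of the argument.
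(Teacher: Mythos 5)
Your proof is correct and takes essentially the same route as the paper: define $\Psi$ by deleting the clique edges $E(K_U)$ (with inverse re-adjoining them), and descend to degree sequences via the uniform shift $\mfu = \bd(U) \ominus (|U|-1)$, $\mfw = \bd(W)$. The additional verifications you spell out --- that the two maps are mutually inverse, that subtracting $|U|-1$ stays nonnegative, and that zeros in $\mfu$ correspond to clique vertices with no neighbor in $W$ --- are exactly the routine checks the paper leaves implicit (it calls the reverse direction ``self-evident'').
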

\begin{proof}
Consider the split graph $(\langle U, W\rangle ; E)$ and delete all edges within $U$. We obtained a splitted bipartite graph
$$
\Psi\Big ((\langle U, W\rangle ; E)\Big ):= (\langle \mfU , \mfW \rangle ; \mfE),
$$
where $\mfE = E \setminus K_U$ and where $\mfU$ may contain vertices of degree zero. The other direction is self-evident. For degree sequences we define accordingly: from the split degree sequence $\bbd(\bd(U),\bd(W))$ we derive $\bbd\langle  \mfu, \mfw \rangle$ by $\mfu = \bd(U) \ominus (|U|-1)$ and $\mfw = \bd(W).$ Here $(\bd \,\ominus\,c)$ denotes an operation in which every component of a vector $\bd$ is lowered by the amount $c.$\end{proof}
\begin{definition}\label{def:bipart-composite} \chg{Let}
$\mfG_1:=(\langle \mfU , \mfW \rangle ; \mfE)$ and $\mfG_2:=(\langle \mathfrak{X} , \mathfrak{Y}\rangle; \mathfrak{F})$ be two splitted bipartite graphs. \chg{We then} define their {\em bipartite composition} through Lemma \ref{th:biject}:
\begin{equation}
\mfG_1 \circ \mfG_2 := \Psi \Big ( \Psi^{-1}(\mfG_1) \circ \Psi^{-1}(\mfG_2) \Big ). \end{equation}
This notion \chg{helps to provide} a decomposition of a splitted bipartite graph in a \chg{similar form} to equation (\ref{eq:decomp1}).
\end{definition}
Lemma \ref{th:biject} and Definition \ref{def:bipart-composite} are very useful, since they provide a full battery of tools to handle the compositions of splitted bipartite graphs. But it is clear that $\mfG_1 \circ \mfG_2$ can be constructed directly. Namely
\begin{equation}\label{eq:bieq}
\mfG_1 \circ \mfG_2 =\Big ( \langle \mfU \cup \mathfrak{X}, \mfW \cup \mathfrak{Y}\rangle ; \mfE \cup \mathfrak{F} \cup K_{\mfU, \mathfrak{Y}} \Big ),
\end{equation}
that is the new primary class is the union of the two original primary classes, the new secondary class is the union of the original secondary classes \chg{and finally,} the edge set is the union of the original edge sets, supplemented by the edge set of the complete bipartite graphs with classes $\mfU$ and $\mathfrak{Y}.$

\chg{
To illustrate these notions we give here two examples, using degree sequences. We adopt the following convention: the primary classes are in the bottom rows and the secondary ones are in the top rows. For simplicity, in the examples below, we do not reorder the vertices in the vectors along the operations, therefore on the RHS the vertices in the top rows are listed in different orders (but they represent the same bipartite degree sequence).
\begin{eqnarray}
\left [ \begin{array}{cc}
1 & 1 \\ 1 & 1
\end{array} \right ]
\circ
\left [ \begin{array}{ccc}
2 & 2 & 1 \\ 3 & 1 & 1
\end{array} \right ]
& =&
\left [ \begin{array}{ccccc}
1 & 1 & 4 & 4 & 3 \\ 4 & 4 & 3 & 1 & 1
\end{array} \right ]\label{eq:egy}
\\
\left [ \begin{array}{ccc}
3 & 1 & 1 \\ 2 & 2 & 1
\end{array} \right ]
 \circ
\left [ \begin{array}{ccc}
1 & 1 \\ 1 & 1
\end{array} \right ]
&=&
\left [ \begin{array}{ccccc}
3 & 1 & 1 & 4 & 4 \\ 4 & 4 & 3 & 1 & 1
\end{array} \right ]\label{eq:ketto}
\end{eqnarray}
Notice, that the degree sequences on the \chg{right-hand} side are the same, and the reasons for why this is the case will be made clear after Theorem \ref{th:bi-decomp}}.

It is important to \chg{emphasize} that the analogy between the composition of split graphs on one hand and composition of splitted bipartite graphs on the other hand is not complete. Any split graph can be composed \chg{with} any simple graph $G$, and if graph $G$ is, by chance, another split graph, then the composition is also a split graph. We have nothing similar in case of splitted bipartite graphs. \chg{Note} that we did not \chg{introduce the} composition of a splitted bipartite graph with a (simple) bipartite graph, \chg{as} the operation cannot be carried out without fixing the primary and secondary classes.

\chg{Also note}, that {\em any} bipartite graph can be considered as a splitted bipartite graph once we choose which vertex class is the primary one, therefore any analogue of Theorem \ref{th:HS} is meaningless in this setup.

The next result is a direct analogue of Theorem \ref{th:T}. (Recall that we do not allow edge-less graphs here):
\begin{lemma}\label{th:Tb}
The splitted bipartite graph $(\langle \mfU , \mfW \rangle ; \mfE)$ with non-increasing bipartite degree sequence  $\bbd(\mfu,\mfw)$ is decomposable iff $\exists p, q$ integers s.t.
\begin{equation}\label{eq:decomp2}
0< p< |\mfU|, 0< q < |\mfW|, \quad \sum_{i=1}^p \mfu_i =p q + \sum_{i=q+1}^{|\mfW| } \mfw_i.
\end{equation}
\end{lemma}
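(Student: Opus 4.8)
The plan is to read off bipartite decomposability directly from the explicit composition formula (\ref{eq:bieq}) and to reduce both directions of the equivalence to a single edge-counting identity; as an independent check the same statement can be obtained from Tyshkevich's Theorem \ref{th:T} through the bijection $\Psi$. First I would observe that, by (\ref{eq:bieq}), writing $\mathfrak{B}=(\langle\mfU,\mfW\rangle;\mfE)$ as $\mfG_1\circ\mfG_2$ amounts exactly to exhibiting partitions $\mfU=\mfU_1\cup\mfU_2$ and $\mfW=\mfW_1\cup\mfW_2$ for which every vertex of $\mfU_1$ is joined to every vertex of $\mfW_2$ (these being the edges of $K_{\mfU_1,\mfW_2}$) while no vertex of $\mfU_2$ is joined to any vertex of $\mfW_1$; the adjacencies between $\mfU_1$ and $\mfW_1$ and between $\mfU_2$ and $\mfW_2$ stay free and become the edge sets of the two factors. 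With $p=|\mfU_1|$ and $q=|\mfW_2|$, a nondegenerate factorization, one in which both factors have a nonempty primary and a nonempty secondary class, is exactly one with $0<p<|\mfU|$ and $0<q<|\mfW|$, matching the ranges in (\ref{eq:decomp2}).

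For necessity I would argue as follows. Every vertex of $\mfU_1$ meets all $q$ vertices of $\mfW_2$, so it has degree $\ge q$, whereas a vertex of $\mfU_2$ meets only $\mfW_2$ and so has degree $\le q$; symmetrically the $\mfW_2$-vertices have degree $\ge p$ and the $\mfW_1$-vertices degree $\le p$. Hence $\mfU_1$ comprises $p$ primary vertices of maximal degree and $\mfW_1$ the $|\mfW|-q$ secondary vertices of minimal degree, whence $\sum_{v\in\mfU_1}d(v)=\sum_{i=1}^p\mfu_i$ and $\sum_{w\in\mfW_1}d(w)=\sum_{i=q+1}^{|\mfW|}\mfw_i$ no matter how ties at the thresholds are resolved. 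Splitting the edge set according to the blocks it joins gives $\sum_{i=1}^p\mfu_i=pq+e(\mfU_1,\mfW_1)$ and $\sum_{i=q+1}^{|\mfW|}\mfw_i=e(\mfU_1,\mfW_1)$, where $e(\cdot,\cdot)$ counts edges between two sets; subtracting these yields (\ref{eq:decomp2}).

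For sufficiency I would run the count in reverse. Given $p,q$ in the admissible ranges satisfying (\ref{eq:decomp2}), I put the $p$ highest-degree primary vertices into $\mfU_1$ and the $q$ highest-degree secondary vertices into $\mfW_2$, and let $a\ge0$ denote the number of missing edges of $K_{\mfU_1,\mfW_2}$ and $b\ge0$ the number of present edges between $\mfU_2$ and $\mfW_1$. The same block decomposition now reads $\sum_{i=1}^p\mfu_i-\sum_{i=q+1}^{|\mfW|}\mfw_i=e(\mfU_1,\mfW_2)-e(\mfU_2,\mfW_1)=(pq-a)-b$, so (\ref{eq:decomp2}) forces $a+b=0$ and hence $a=b=0$. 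Thus $\mfU_1$ is completely joined to $\mfW_2$ and $\mfU_2$ completely nonadjacent to $\mfW_1$, which is precisely the structure required for a factorization, and the strict inequalities make all four blocks nonempty so that both factors are genuine splitted bipartite graphs.

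The point I expect to need the most care is the handling of degenerate factors, which is also why (\ref{eq:decomp2}) carries two one-sided strict inequalities rather than the single condition $0<p+q<n$ of Theorem \ref{th:T}. Through $\Psi$ a split-graph factorization of $\Psi^{-1}(\mathfrak{B})$ may peel off a clique-only or an independent-set-only piece; in bipartite terms these correspond to isolated zero-degree primary vertices, or to secondary vertices joined only through the composition, and they carry no genuine bipartite content. Such boundary good pairs of the associated split graph must be discarded, and $0<p<|\mfU|$, $0<q<|\mfW|$ are exactly what does this. The remaining, purely bookkeeping, issue is the tie-breaking at degrees equal to $q$ (for $\mfU$) and to $p$ (for $\mfW$): this never changes the relevant degree sums, but I would state it explicitly to keep the necessity argument rigorous.
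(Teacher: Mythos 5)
Your proof is correct, and it takes a genuinely different route from the paper: the paper in fact states Lemma \ref{th:Tb} \emph{without proof}, presenting it as a ``direct analogue'' of Theorem \ref{th:T} whose verification (like that of the unformalized parts (II) and (III)) is left as a simple consequence of the correspondence $\Psi$ of Lemma \ref{th:biject} --- i.e., the intended argument is to pass to the split graph $\Psi^{-1}\big((\langle \mfU,\mfW\rangle;\mfE)\big)$, invoke Tyshkevich's criterion (\ref{eq:decomp}), and translate back via $\mfu=\bd(U)\ominus(|U|-1)$. You instead prove the criterion natively from the composition formula (\ref{eq:bieq}) by the classical double count in the style of Hammer--Simeone: in the forward direction the degree thresholds $q$ and $p$ separate the blocks up to ties, which you correctly note are sum-neutral, and in the converse the identity forces the deficiency $a$ of $K_{\mfU_1,\mfW_2}$ and the excess $b=e(\mfU_2,\mfW_1)$ to satisfy $a+b=0$, hence both vanish. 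Your route buys self-containedness and transparency about the parameters --- note in particular that the lemma's $q$ counts the secondary class of the \emph{second} factor, whereas in Theorem \ref{th:T} the $q$ of a good pair is the secondary class of the \emph{first} factor, so your proposed ``independent check'' through $\Psi$ would additionally need the substitution $q\mapsto|\mfW|-q$ when translating (\ref{eq:decomp}) into (\ref{eq:decomp2}); your direct proof sidesteps this entirely. You also handle explicitly the one point where the $\Psi$-reduction is delicate: the boundary values $p\in\{0,|\mfU|\}$, $q\in\{0,|\mfW|\}$ correspond to degenerate peels (clique-only or independent-set-only split factors, i.e., zero-degree primary vertices or secondary vertices adjacent to the whole primary class), which in the split order produce ties between full-degree secondary vertices and low-degree clique vertices; the two-sided strict inequalities in (\ref{eq:decomp2}) are exactly what excludes them, as you observe. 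The paper's implicit route is shorter given the machinery already in place, which is presumably why the authors omit the proof, but your argument would stand on its own even without Theorem \ref{th:T}.
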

We do not formalize the analogues of parts (II) and (III) of Theorem \ref{th:T}. Those results are simple consequences of the correspondence $\Psi,$  similarly to the following decomposition theorem:
\begin{theorem}\label{th:bi-decomp}
Any bipartite  graph with fixed designations of its primary and secondary classes has a  unique canonical decomposition into splitted bipartite graphs (and the corresponding bipartite degree sequence into splitted bipartite degree sequences).
\end{theorem}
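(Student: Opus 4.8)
The plan is to reduce the entire statement to the already-established split graph decomposition theorem (Corollary~\ref{th:T1}) via the bijection $\Psi$ of Lemma~\ref{th:biject}. The key observation is that $\Psi$ is not merely a bijection on objects but is compatible with the composition operation: by Definition~\ref{def:bipart-composite}, bipartite composition was \emph{defined} precisely so that $\mfG_1 \circ \mfG_2 = \Psi(\Psi^{-1}(\mfG_1)\circ \Psi^{-1}(\mfG_2))$. Thus $\Psi$ transports the algebraic structure of split graph composition onto splitted bipartite composition exactly.

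First I would fix a bipartite graph $\mfB$ together with a designated primary class $\mfU$ and secondary class $\mfW$; this choice is exactly what lets us regard $\mfB$ as a splitted bipartite graph $(\langle \mfU,\mfW\rangle;\mfE)$. I would then apply $\Psi^{-1}$ to obtain a genuine split graph $S=\Psi^{-1}(\mfB)=(\langle U,W\rangle;E)$ (adding back the clique $K_U$ on the primary class), invoking Lemma~\ref{th:biject} to guarantee this is well-defined and bijective. By Corollary~\ref{th:T1}, the split graph $S$ admits a \emph{unique} canonical decomposition $S=(\langle U_1,W_1\rangle;E_1)\circ\cdots\circ(\langle U_\ell,W_\ell\rangle;E_\ell)\circ\G_0$ into indecomposable split graphs and at most one non-split remainder $\G_0$. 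The crucial point here is that since $S$ itself \emph{is} a split graph, the non-split factor $\G_0$ must be absent (or itself split), so the entire decomposition consists of split-graph factors --- this is exactly the regime in which $\Psi$ can be applied factor-by-factor.

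Next I would push this decomposition forward through $\Psi$. Applying $\Psi$ to each split factor yields splitted bipartite graphs $\mfG_i:=\Psi((\langle U_i,W_i\rangle;E_i))$, and the compatibility of $\Psi$ with composition (Definition~\ref{def:bipart-composite}) gives
$$
\mfB=\Psi(S)=\mfG_1\circ\mfG_2\circ\cdots\circ\mfG_\ell,
$$
a decomposition of $\mfB$ into splitted bipartite graphs. For uniqueness, I would run the argument in reverse: any splitted bipartite decomposition of $\mfB$ pulls back under $\Psi^{-1}$ (again using the defining relation of $\circ$ in Definition~\ref{def:bipart-composite}) to a split-graph decomposition of $S$ into indecomposable factors, which by the uniqueness clause of Corollary~\ref{th:T1} must coincide with the canonical one. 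Since $\Psi$ is a bijection, this forces the splitted bipartite decomposition to be unique as well. The degree-sequence version then follows immediately because the correspondence $\Psi$ acts on degree sequences via $\mfu=\bd(U)\ominus(|U|-1)$, $\mfw=\bd(W)$, as recorded in Lemma~\ref{th:biject}.

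The main obstacle, and the step deserving the most care, is verifying that ``indecomposable'' is preserved in both directions under $\Psi$, i.e.\ that a split factor is indecomposable as a split graph precisely when its image is indecomposable as a splitted bipartite graph. This is where Lemma~\ref{th:Tb} does the real work: its decomposability criterion \eqref{eq:decomp2} is the $\Psi$-image of Tyshkevich's criterion \eqref{eq:decomp} (the shift by $|U|-1$ on primary degrees converting $d_i$ into $\mfu_i$ and turning the term $p(n-q-1)$ into $pq$), so a good pair for $S$ corresponds bijectively to a good pair for $\mfB$. One must also confirm that the designation of primary and secondary classes is respected throughout --- the non-commutativity of $\circ$ means the ordering of factors is rigid, and I would check that $\Psi$ preserves this ordering so that the uniqueness ``up to isomorphism'' in Corollary~\ref{th:T1} descends correctly. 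Once the indecomposability correspondence is nailed down, the rest is a formal transport of structure.
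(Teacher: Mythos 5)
Your proposal is correct and takes essentially the same route as the paper: the paper's (much terser) proof likewise embeds the bipartite graph into a split graph via Lemma~\ref{th:biject}, generates its canonical decomposition (Corollary~\ref{th:T1}), and strips the factors back down into splitted bipartite graphs. Your extra verifications --- that $\Psi$ is compatible with $\circ$ by Definition~\ref{def:bipart-composite}, that every canonical factor of a split graph is itself split, and that indecomposability transfers via Lemma~\ref{th:Tb} --- simply make explicit what the paper leaves implicit.
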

\begin{proof}
The simplest possible treatment is to embed the graph into a split graph, using Lemma \ref{th:biject}, then generate its canonical decomposition. Finally the components can be stripped down into splitted bipartite graphs.
\end{proof}

\medskip\noindent
\chg{
The reason that the RHSs of equations (\ref{eq:egy}) and (\ref{eq:ketto}) are the same is that on the left-hand sides not all components are undecomposable. In fact, the canonical decomposition of the right-hand side of (\ref{eq:egy}) is the following:
\begin{equation}\label{eq:harom}
\left [ \begin{array}{cc}
1 & 1 \\ 1 & 1
\end{array} \right ]
\circ
\left [ \begin{array}{c}
1  \\ 1
\end{array} \right ]
\circ
\left [ \begin{array}{cc}
1 & 1 \\ 1 & 1
\end{array} \right ]
\end{equation}
It is easy to recognize, that in the LHS of (\ref{eq:egy}) the second component is the composition of the second and third components in (\ref{eq:harom}). In the LHS of (\ref{eq:ketto}) the first component is the composition of the first and second  components in (\ref{eq:harom}). Therefore the right-hand sides of  (\ref{eq:egy})  and (\ref{eq:ketto}) must be the same.
}
\medskip

\begin{remark}
While the previous observation may provide some decomposition method for graphs using splitted bipartite graphs, we are not interested here in such a process. Instead, we will use the composition process to provide large classes of general degree sequences with fast mixing swap MCMC.
\end{remark}

\medskip\noindent It is easy to see that the analogous statement of Lemma \ref{th:component-wise} remains valid for splitted bipartite graphs as well:
\begin{lemma}\label{th:bi-component-wise}
In the composition graph $(\langle \mfU, \mfW \rangle; \mfE) \circ \G$ any swap operation belongs completely to exactly one component. In other words all four participating vertices are within $\mfU\cup \mfW$ or in $V(\G).$
\end{lemma}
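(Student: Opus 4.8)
The plan is to reduce the statement to its split-graph analogue, Lemma~\ref{th:component-wise}(i), through the bijection $\Psi$ of Lemma~\ref{th:biject}, rather than re-running the entire argument in the bipartite setting. Writing $\G=(\langle\mathfrak{X},\mathfrak{Y}\rangle;\mathfrak{F})$ for the second (splitted bipartite) operand, so that $V(\G)=\mathfrak{X}\cup\mathfrak{Y}$, the key observation I would exploit is that swaps of a splitted bipartite graph and swaps of its associated split graph are in exact correspondence, acting on the same four vertices.

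First I would recall that, by the definition of bipartite composition (Definition~\ref{def:bipart-composite}), one has $\Psi^{-1}\bigl((\langle\mfU,\mfW\rangle;\mfE)\circ\G\bigr)=\Psi^{-1}\bigl((\langle\mfU,\mfW\rangle;\mfE)\bigr)\circ\Psi^{-1}(\G)$; thus the split graph underlying the bipartite composition is precisely the split-graph composition of the underlying split graphs. Next I would establish the swap correspondence: passing from a splitted bipartite graph to its split graph under $\Psi^{-1}$ only inserts clique edges inside the primary classes, and—as already noted in Section~\ref{sec:decomp}—in any split graph the only edge pairs that can take part in a swap run between the primary and the secondary class. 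Consequently no swap can ever use these added clique edges, so every swap of the splitted bipartite graph is a valid swap of the split graph and conversely, each acting on an identical vertex quadruple.

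With this in place the lemma follows immediately: applying Lemma~\ref{th:component-wise}(i) to the split-graph composition shows that the four vertices of any swap lie entirely within one component, either in $\mfU\cup\mfW$ or in $V(\Psi^{-1}(\G))=\mathfrak{X}\cup\mathfrak{Y}$. Since $\Psi$ leaves vertex sets unchanged and merely strips the intra-primary clique edges, translating back shows that the corresponding bipartite swap has all four vertices in $\mfU\cup\mfW$ or in $V(\G)$, as claimed.

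The one point that genuinely needs care—and the only place a gap could hide—is the \emph{exactness} of the swap correspondence under $\Psi$: I must verify that deleting the intra-primary clique edges neither destroys an existing swap nor manufactures a new one. This is exactly where the structural fact that split-graph swaps never touch clique edges is used, and it is the crux of the reduction. As a sanity check, and an alternative self-contained route, one can argue directly from the explicit description (\ref{eq:bieq}): any attempted cross-component swap would be forced either to treat one of the forced edges of $K_{\mfU,\mathfrak{Y}}$ as a non-edge or one of the absent $\mathfrak{X}$--$\mfW$ pairs as an edge, and a short case analysis over the placement of the four vertices rules out every such configuration.
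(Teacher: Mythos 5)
Your proof is correct and follows essentially the route the paper intends: the lemma is stated there without an explicit proof, as the ``analogous statement'' of Lemma~\ref{th:component-wise}, relying on exactly the $\Psi$-correspondence machinery (Lemma~\ref{th:biject}, Definition~\ref{def:bipart-composite}) that you make explicit, and the crux you verify --- that swaps in a split graph never touch the intra-primary clique edges, so $\Psi$ induces an exact swap correspondence --- is the same observation the paper itself records (as its second proof of Remark~\ref{th:name}, and again in Lemma~\ref{th:Mgraph}). Your alternative direct case analysis via (\ref{eq:bieq}) likewise mirrors the paper's one-line argument for Lemma~\ref{th:component-wise}(i), so nothing is missing.
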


The  following statement is a direct analog of Theorem \ref{th:direct}:
\begin{theorem}\label{th:bi-direct}
Let $\mfS=(\langle \mfU,\mfW\rangle ; \mfE )$ be a splitted bipartite graph with splitted bipartite degree sequence $\bbd(\mfS)$ and let $\G$ be an arbitrary graph with degree sequence $\bd(G)$. Then the Markov graph of the composition degree sequence is the Cartesian product of the two original Markov graphs:
$$
\bG \big ( \bbd(\mfS) \circ \bd(\G) \big ) = \bG\big ( \bbd(\mfS)\big ) \ \square \ \bG \big (\bd(\G) \big ).
$$
\end{theorem}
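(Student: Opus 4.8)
The plan is to deduce this statement from its split-graph counterpart, Theorem~\ref{th:direct}, by transporting everything through the correspondence $\Psi$ of Lemma~\ref{th:biject}. Write $S=\Psi^{-1}(\mfS)$ for the split graph associated to $\mfS$. Unwinding the definition of the composition through $\Psi$ (in the spirit of Definition~\ref{def:bipart-composite}), the composition degree sequence $\bbd(\mfS)\circ\bd(\G)$ is by construction the degree sequence of the split-graph composition $S\circ\G$ recorded in (\ref{eq:compsec}). Hence Theorem~\ref{th:direct} applies directly and gives
$$
\bG\big(\bbd(\mfS)\circ\bd(\G)\big)=\bG\big(\bd(S)\big)\ \square\ \bG\big(\bd(\G)\big),
$$
so that the whole task reduces to identifying the first factor $\bG(\bd(S))$ with $\bG(\bbd(\mfS))$.

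First I would establish the key claim that $\Psi$ induces an isomorphism of Markov graphs $\bG(\bd(S))\cong\bG(\bbd(\mfS))$. At the level of vertices this is immediate: by Remark~\ref{th:name} every realization of $\bd(S)$ is a split graph with the same primary/secondary partition $\langle U,W\rangle$, and $\Psi$ (deleting the edges inside $U$) sends it bijectively to a splitted bipartite realization of $\bbd(\mfS)$; since the clique $K_U$ is present in every realization no information is lost, and possible degree-zero vertices of $\mfU$ cause no difficulty. For the edges of the two Markov graphs I would invoke the observation recorded just after Remark~\ref{th:name}: in any split graph the only edge pairs admitting a swap are those running between $U$ and $W$, and the resulting pair again runs between $U$ and $W$. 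Such a swap leaves $K_U$ untouched and is precisely a bipartite swap in the image under $\Psi$; conversely, every bipartite swap of a splitted realization lifts (by restoring all the $K_U$ edges) to exactly such a $U$--$W$ swap of the split realization. Thus $\Psi$ carries swap-adjacency to swap-adjacency in both directions, making it an isomorphism of the two Markov graphs.

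Combining the two steps yields
$$
\bG\big(\bbd(\mfS)\circ\bd(\G)\big)=\bG\big(\bd(S)\big)\ \square\ \bG\big(\bd(\G)\big)=\bG\big(\bbd(\mfS)\big)\ \square\ \bG\big(\bd(\G)\big),
$$
which is the assertion. I expect the only delicate point to be the Markov-graph isomorphism of the preceding paragraph, specifically verifying that the swap correspondence is exact in both directions, so that the edge sets of $\bG(\bd(S))$ and $\bG(\bbd(\mfS))$ match up under $\Psi$ with no split-graph swap lost and no bipartite swap unaccounted for. Once this is in place the argument is purely formal, since Theorem~\ref{th:direct} already supplies the Cartesian-product structure; the sole role of $\Psi$ is to re-express the split factor as the bipartite factor, which is exactly what will later let us feed the bipartite fast-mixing results of Theorem~\ref{th:MCMCk} into the meta-theorem.
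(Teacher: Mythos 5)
Your proposal is correct and takes essentially the same route as the paper, which presents Theorem \ref{th:bi-direct} precisely as a transport of Barrus's Theorem \ref{th:direct} through the correspondence $\Psi$ of Lemma \ref{th:biject}, with swap locality supplied by Lemma \ref{th:bi-component-wise}. The ``key claim'' you isolate --- that $\Psi$ gives a weight-preserving isomorphism of Markov graphs because the $K_U$ edges never participate in any swap while all remaining edges and swaps correspond one-to-one --- is exactly the paper's Lemma \ref{th:Mgraph}, proved there by the same argument you give.
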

\noindent We now turn to directed graphs and directed degree sequences by first recalling our bipartite representation
$B({\vec G})=(U,W; E)$ of a directed graphs, as described in section \ref{subsec:graphs}.
It is easy to see that all the definitions and results we have obtained in Section \ref{sec:bipartite} remain almost unchanged if we repeat them for splitted bipartite degree sequences with a forbidden 1-factor. To that end we have to recognize that along the composition process the forbidden 1-factors merge into another 1-factor. Furthermore, the existence of the forbidden edges somewhat tighten the available swap operations, but they  do not affect the locality of those swaps. Most importantly the following statement is valid:

\begin{theorem}\label{th:di-direct}
Let $\mfS_1=(\langle \mfU_1,\mfW_1\rangle ; \mfE_1 )$ be a splitted bipartite graph with forbidden one factor $\F_1$ and $\mfS_2=(\langle \mfU_2,\mfW_2\rangle ; \mfE_2 )$ be a splitted bipartite graph with forbidden one factor $\F_2$ $($and with splitted bipartite degree sequences $\bbd(\mfS_i), i=1,2).$ Then the Markov graph of the composition degree sequence is the Cartesian product of the two original Markov graphs:
$$
\bG \big ( \bbd(\mfS_1) \circ \bbd(\mfS_2) \big ) = \bG\big ( \bbd(\mfS_1)\big ) \ \square \ \bG \big (\bbd(\mfS_2) \big )\,,
$$
where the forbidden 1-factor for $\bbd(\mfS_1) \circ \bbd(\mfS_2)$ is $\F_1 \cup \F_2.$
\hfill $\Box$
\end{theorem}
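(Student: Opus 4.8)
The plan is to follow the proof of Theorem~\ref{th:bi-direct} (and ultimately of Theorem~\ref{th:direct}), upgrading it to the forbidden-edge setting while keeping careful track of the two one-factors. Two things must be verified: that the vertex sets of the two Markov graphs are in a natural bijection, and that their edge sets match under this bijection. The only genuinely new ingredient compared with Theorem~\ref{th:bi-direct} is the presence of the forbidden one-factors, together with the $C_6$-swaps that they make necessary; note in particular that the split-graph correspondence $\Psi$ cannot be invoked directly, since the forbidden edges have no counterpart in the split-graph world.

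First I would settle the forbidden one-factor itself. By~(\ref{eq:bieq}) the composition adds exactly the complete bipartite block $K_{\mfU_1,\mfW_2}$, so the only edges created between the two components run from $\mfU_1$ to $\mfW_2$. In the directed interpretation a forbidden edge is a ``self-loop pair'' $(u_x,w_x)$, and since $\vec G_1$ and $\vec G_2$ have disjoint vertex sets, no edge of $K_{\mfU_1,\mfW_2}$ can be of this type; hence $\F_1\cup\F_2$ is again a (partial) matching, and it is precisely the set of self-loop pairs of the composite directed graph. This justifies taking $\F_1\cup\F_2$ as the forbidden one-factor. Next I would establish the vertex bijection. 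As in~(\ref{eq:bieq}) the primary degrees of $\mfU_1$ are raised by $|\mfW_2|$ (and the secondary degrees of $\mfW_2$ by $|\mfU_1|$), while $\mfU_2$ and $\mfW_1$ receive no boost across components; the structural canonical-decomposition argument then forces, in \emph{every} realization, the block $\mfU_1$--$\mfW_2$ to be complete and the block $\mfU_2$--$\mfW_1$ to be empty. Consequently a realization of $\bbd(\mfS_1)\circ\bbd(\mfS_2)$ is completely determined by its restrictions to the two components, and these restrictions are exactly realizations of $\bbd(\mfS_1)$ (avoiding $\F_1$) and of $\bbd(\mfS_2)$ (avoiding $\F_2$). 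This yields the bijection $V\big(\bG(\bbd(\mfS_1)\circ\bbd(\mfS_2))\big)\leftrightarrow V\big(\bG(\bbd(\mfS_1))\big)\times V\big(\bG(\bbd(\mfS_2))\big)$.

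Finally comes the edge correspondence, where the main work lies. For $C_4$-swaps the locality argument of Lemma~\ref{th:bi-component-wise} applies verbatim: the rigid complete/empty block structure makes any swap with vertices in both components impossible, and the forbidden edges can only remove swaps, never create cross-component ones. The new and hardest step is to prove the same locality for $C_6$-swaps. Here I would argue by case analysis over how the six participating vertices distribute between the two components, using that three of the involved pairs must be forbidden (hence self-loop pairs, forcing equal indices) while the edges and the non-edge chords must respect the complete block $K_{\mfU_1,\mfW_2}$ and the empty block $\mfU_2$--$\mfW_1$; every cross-component distribution should then be ruled out by one of these constraints. I expect this $C_6$ case analysis to be the principal obstacle, since, unlike the $C_4$ case, it has no direct precedent in Lemma~\ref{th:bi-component-wise}. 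Once locality is secured, each admissible swap changes the realization of exactly one component and realizes precisely an edge of that component's own (forbidden) swap Markov graph, while fixing the other coordinate; this is exactly the Cartesian product structure, which gives the claimed identity and, via Theorem~\ref{th:directproduct}, transfers fast mixing to the composite sequence.
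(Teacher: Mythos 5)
Your proposal is correct and takes essentially the same route as the paper, whose own ``proof'' consists precisely of the remark preceding the theorem --- that under composition the forbidden 1-factors merge into a single (partial) 1-factor and that forbidden edges only tighten, but never delocalize, the admissible swaps --- so your write-up merely supplies the details the paper leaves implicit. The $C_6$ case you flag as the principal obstacle is in fact immediate: since every realization of the composite sequence has the block $\mfU_1$--$\mfW_2$ complete and $\mfU_2$--$\mfW_1$ empty, none of the six alternating edge/non-edge-chord pairs of a valid $C_6$-swap can lie in a cross block (a removal would break completeness, an addition would break emptiness), so the connected $6$-cycle, and with it the three forbidden diagonals, sits entirely inside one component.
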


It is important to mention that the canonical degree sequence ``decomposition'' we use here for directed graphs has nothing to do with other decomposition methods introduced in the literature, for example by LaMar \cite{LM12}. We use this approach only to extend the class of known directed degree sequences with fast mixing Markov chains.

\section{Extending the classes of degree sequences with fast mixing swap Markov chains}\label{sec:direct}

Now we are ready to present our new degree sequence classes beyond the known ones with fast mixing MCMC sampling. At first we describe the classes, then we generate some simple estimates to compare the sizes of the old and new classes. We start with a few, almost trivial, observations:
\begin{lemma}\label{th:Mgraph}
Let $(\langle U, W\rangle ; E)$ be a split graph and $\Psi\big ((\langle U, W\rangle ; E)\big ):= (\langle \mfU , \mfW \rangle ; \mfE)$ be the corresponding splitted bipartite graph. Then the Markov graphs
\begin{equation}\label{eq:equi}
\bG\big  (\bbd(\bd(U), \bd(W))\big ) \cong \bG\big( \bbd\langle  \mfu, \mfw \rangle \big )
\end{equation}
are isomorphic under $\Psi$ and the corresponding edges have equal weights (i.e., transition probabilities). Consequently if the MCMC process is fast mixing on the second Markov graph then it is also fast mixing on the first  one.
\end{lemma}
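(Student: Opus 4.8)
The plan is to exhibit the isomorphism explicitly via the correspondence $\Psi$ from Lemma~\ref{th:biject} and then check that $\Psi$ respects the swap-adjacency structure and the transition weights. First I would recall that, by Remark~\ref{th:name}, every realization of the split degree sequence $\bbd(\bd(U),\bd(W))$ is a split graph with clique $U$ and independent set $W$; such a realization is completely determined by its bipartite part between $U$ and $W$, since the clique $K_U$ is forced and there are no edges inside $W$. Applying $\Psi$ (delete all edges inside $U$) sends each such realization to a splitted bipartite realization of $\bbd\langle\mfu,\mfw\rangle$, where $\mfu=\bd(U)\ominus(|U|-1)$ accounts for the removed clique edges. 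This map is a bijection on the vertex sets of the two Markov graphs: its inverse simply re-adds the complete graph $K_U$, and by the discussion following Remark~\ref{th:name} (or directly from Theorem~\ref{th:HS}) nothing else can vary.

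Next I would verify that $\Psi$ carries edges of $\bG(\bbd(\bd(U),\bd(W)))$ to edges of $\bG(\bbd\langle\mfu,\mfw\rangle)$ and vice versa. The key observation, already noted in the paragraph before Lemma~\ref{th:component-wise} and in the discussion of Remark~\ref{th:name}, is that in a split graph the only swaps that can ever be performed use one edge and one non-edge running between $U$ and $W$; any swap touching the clique or two independent vertices would create a multiple edge or destroy the clique/independent-set structure. Hence every valid swap acts purely on the bipartite part between $U$ and $W$, which is exactly the part preserved by $\Psi$. Conversely, a swap in the splitted bipartite graph uses two chords between $\mfU$ and $\mfW$, and re-adding $K_U$ does not interfere with it. So a pair of split realizations is swap-adjacent if and only if their $\Psi$-images are swap-adjacent, giving a graph isomorphism $\bG(\bbd(\bd(U),\bd(W)))\cong\bG(\bbd\langle\mfu,\mfw\rangle)$.

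It then remains to match the edge weights, i.e.\ the transition probabilities. The swap chain picks an unordered pair of vertex-pairs (or an ordered pair of edges, depending on the exact implementation) uniformly at random and attempts the swap; since $\Psi$ is a bijection between the two realization sets and the allowable swaps are in one-to-one correspondence under the argument above, the transition probability between two adjacent split realizations equals the transition probability between their images, provided the chains are set up with the same normalization over the common ground set $U\cup W$. Because mixing is governed by the spectral gap $1-\lambda_2$ (Theorem~\ref{th:directproduct} and the surrounding discussion), isomorphic Markov graphs with identical edge weights have identical transition matrices up to relabeling, hence identical spectra; fast mixing on one therefore transfers verbatim to the other.

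The main obstacle I anticipate is not the combinatorial bijection, which is essentially forced, but the bookkeeping around the transition probabilities: one must confirm that the two chains are normalized over the \emph{same} number of candidate vertex-pairs, so that the per-swap probabilities truly coincide rather than merely being proportional. The subtlety is that a split graph on $|U|+|W|$ vertices and its splitted bipartite image live on the same vertex set, and one should check that the laziness and the uniform choice of the attempted swap are defined with respect to this common vertex set (e.g.\ all pairs from $U\cup W$), rather than, say, only the chords in the bipartite picture. Once the two chains are declared over the same ground set with the same swap-selection rule, the non-chord attempts (clique pairs and within-$W$ pairs) simply contribute identical self-loop probabilities on both sides and drop out of the comparison, so the weights on corresponding edges match exactly and the conclusion follows.
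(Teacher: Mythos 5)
Your proposal is correct and takes essentially the same approach as the paper's own proof, which likewise observes that the clique edges $K_U$ never participate in any swap, that the remaining edges are in one-to-one correspondence with the edges of the splitted bipartite graph, and that the transition probabilities of corresponding swaps are equal by definition. Your extra bookkeeping about normalizing both chains over the common ground set $U\cup W$ (so rejected non-chord proposals become identical self-loops on both sides) merely spells out what the paper compresses into the phrase \emph{equal by definition}.
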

\begin{proof}
Each split graph has many more edges than its equivalent splitted bipartite graph, namely, by the number of edges in the complete graph in $U$. However, none of these edges ever participate in any swap operation (in moving along the Markov chain). All the other edges of the split graph are in one-to-one correspondence with the edges of the splitted bipartite graph. Finally the transition probabilities for these swaps are equal by definition.
\end{proof}

Note that when we delete the edges of the $K_U$ from a split graph to obtain the corresponding splitted bipartite graph we may end up with nodes in $U$ that have zero degrees. And vice-versa, adding the edges of $K_U$ to the primary vertex set of the splitted bipartite graph we may end up with way more edges than before. However, this does not affect the MCMC process. Nodes with zero degree do not participate in any swap, and the added extra edges cannot participate either.
Thus, while formally we have new Markov graphs (and new Markov chains), there is a clear-cut natural isomorphism between the original and the ``extended'' Markov graphs and under this isomorphism the transition probabilities are completely unchanged. However, the large size of the new classes of sequences with fast mixing swap MCMC is not due to this trivial addition of nodes with zero degrees; it is already a property of the constructed class of sequences that contain no zero degree nodes and that is how we will formulate our results, below.

\medskip\noindent
Next we will carry out the program expressed in the Meta-Theorem. At first we will consider almost-half regular splitted bipartite graphs and their splitted graph equivalents (or more precisely the corresponding degree sequences) as the main building blocks of our degree sequences.

Consider $k$ almost-half-regular bipartite graphs that is, with one side of the partition the degrees of any two vertices differ by at most unity and there are no restrictions on the other side.  For every bipartite graph assign the primary $\mfU$ and secondary $\mfW$ designations to its vertex classes, thus defining a sequence $\mfS_1,\ldots,\mfS_k$ of splitted bipartite graphs. Note that it does not matter if the half-regular partition is primary or secondary, both assignments are valid and thus  the $k$ bipartite graphs generate $2^k$ sequences of splitted bipartite graphs. Furthermore let $S_1,\ldots, S_k$ denote their split graph counterparts
(here all classes $U_i$ form complete graphs) and finally, let $\bd_0$ be a degree sequence with fast mixing MCMC sampling process on its Markov graph (e.g., any degree sequence listed in Theorem \ref{th:MCMCk}). Then:
\begin{theorem}\label{th:fast1}
The degree sequences
\begin{eqnarray}
\label{eq:pld1} \bbd(S_1) &\circ& \bbd(S_2) \circ \cdots \circ \bbd(S_k) \circ \bd(\G_0); \\
\label{eq:pld2} \bbd(\mfS_1) &\circ& \bbd(\mfS_2) \circ \cdots \circ \bbd(\mfS_k) \circ \bd(\G_0)
\end{eqnarray}
have fast mixing MCMC sampling processes on their Markov graphs.
\end{theorem}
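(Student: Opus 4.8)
The plan is to read Theorem~\ref{th:fast1} as a concrete instance of the Meta-theorem: the composed degree sequence has a canonical decomposition into the factors $\bbd(S_i)$ (resp.\ $\bbd(\mfS_i)$) and $\bd(\G_0)$, so once each factor is shown to mix rapidly, the conclusion is immediate. The two displayed sequences (\ref{eq:pld1}) and (\ref{eq:pld2}) can be handled together, because by Lemma~\ref{th:Mgraph} the Markov graph of a split factor $\bbd(S_i)$ and that of its splitted bipartite counterpart $\bbd(\mfS_i)$ are isomorphic under $\Psi$ with identical transition probabilities; thus fast mixing for one form transfers verbatim to the other.

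First I would expose the Cartesian-product structure by peeling the factors off one at a time. For (\ref{eq:pld1}) I would write the sequence as $\bbd(S_1)\circ\bigl(\bbd(S_2)\circ\cdots\circ\bd(\G_0)\bigr)$ and apply Theorem~\ref{th:direct} with $S_1$ as the split operand and the remaining composition playing the role of ``the arbitrary graph''; iterating, and using the associativity recorded in Corollary~\ref{th:T1}, gives
\[
\bG\bigl(\bbd(S_1)\circ\cdots\circ\bbd(S_k)\circ\bd(\G_0)\bigr)=\bG\bigl(\bbd(S_1)\bigr)\ \square\ \cdots\ \square\ \bG\bigl(\bbd(S_k)\bigr)\ \square\ \bG\bigl(\bd(\G_0)\bigr),
\]
a $(k+1)$-fold Cartesian product. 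For (\ref{eq:pld2}) the identical peeling works with Theorem~\ref{th:bi-direct} in place of Theorem~\ref{th:direct}.

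Next I would certify that every factor mixes rapidly. Each $\mfS_i$ is almost-half-regular by construction, so $\bG\bigl(\bbd(\mfS_i)\bigr)$ is fast mixing by Theorem~\ref{th:MCMCk}(E); for the split form, the factor $\bbd(S_i)$ is itself \emph{not} half-regular (the clique on $\mfU$ inflates the primary degrees), but Lemma~\ref{th:Mgraph} transports the rapid-mixing estimate from the splitted bipartite version. The last factor $\bG\bigl(\bd(\G_0)\bigr)$ is fast mixing by hypothesis. With the product decomposition and these per-factor estimates in hand, I would invoke Theorem~\ref{th:directproduct}: hypothesis~(1) --- that a single swap moves within exactly one factor, with transition probability independent of the other coordinates --- is precisely Lemma~\ref{th:component-wise} (resp.\ Lemma~\ref{th:bi-component-wise}), while (2) and (3) are the standard reversibility and the rapid mixing just established. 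Since each split factor contributes at least one vertex, $K=k+1\le n+1=O(n)$ is polynomial in the overall size $n$, and the relaxation-time bound of Theorem~\ref{th:directproduct} yields $(1-\lambda_{2,M})^{-1}=O(\poly(n))$.

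The structural theorems of the earlier sections do the real work, so no step is a genuine obstacle; the only care required is uniformity of the estimates. I must ensure that a \emph{single} polynomial bounds the relaxation time of every factor at once: each almost-half-regular factor has at most $n$ vertices and its mixing bound from Theorem~\ref{th:MCMCk}(E) is polynomial in that count, so inflating the per-factor size to the global $n$ preserves a common polynomial bound; the same remark covers $\bd(\G_0)$. I would also double-check that the non-commutative but associative composition of Corollary~\ref{th:T1} makes the iterated peeling independent of the grouping, which is exactly the associativity already available --- completing the application of the Meta-theorem.
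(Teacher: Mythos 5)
Your proposal is correct and takes essentially the same route as the paper: the paper's proof of Theorem~\ref{th:fast1} is just the one-line invocation of the Meta-theorem (Theorem~\ref{th:directproduct} plus the Cartesian-product structure of the composition), which you have merely unpacked in full --- iterated peeling via Theorems~\ref{th:direct} and~\ref{th:bi-direct} with associativity, per-factor rapid mixing from Theorem~\ref{th:MCMCk}(E) transferred to the split factors by Lemma~\ref{th:Mgraph}, locality of swaps from Lemmas~\ref{th:component-wise} and~\ref{th:bi-component-wise}, and the bound $K=k+1=O(n)$. Incidentally, your references are the correct ones: the paper's printed proof cites Theorem~\ref{th:di-direct} here (and Theorem~\ref{th:bi-direct} for Theorem~\ref{th:fast2}), which appears to be a typographical swap of the two citations, since the forbidden 1-factor result belongs with Theorem~\ref{th:fast2}.
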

\begin{proof}
By Theorem  \ref{th:directproduct} and Theorem \ref{th:di-direct} the Meta-Theorem applies for these setups.
\end{proof}

In words, the statement says that we can compose several almost-half-regular splitted bipartite graphs, and while the composition itself formally is not almost-half-regular by any means, all compositions admit a fast sampling MCMC, with its speed determined by the slowest mixing coordinate of the chain. As discussed above, there are at most $2^k$ such compositions possible.

It is important to emphasize that the composition of almost-half-regular splitted bipartite degree sequences is, in general, very far from being almost-half-regular and the same applies if we omit the word ``half''. Additionally, the two derived degree sequences (the compositions of split degree sequences and of splitted bipartite degree sequences) and their realizations are very different - consider for example, the sizes of edge sets. However, the Markov graphs of all realizations of the two cases are isomorphic. When graph $\G_0$ is a bipartite graph, then the resulting graph in the second case is also bipartite.

For directed degree sequences we have the following, analogous result:
\begin{theorem}\label{th:fast2}
Assume that  $\mfS_1,\ldots,\mfS_k$ are almost-half-regular splitted bipartite graphs with $\F_i$, $i=1,\ldots,k$ forbidden 1-factors. Then the degree sequence
\begin{equation}
\label{eq:pld3} \bbd(\mfS_1) \circ \bbd(\mfS_2) \circ \cdots \circ \bbd(\mfS_k)
\end{equation}
admits a fast mixing MCMC sampler on its Markov graph.
\end{theorem}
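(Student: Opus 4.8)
\medskip\noindent
The plan is to reduce the statement to the factorization theorem (Theorem~\ref{th:directproduct}), exactly as in the proof of Theorem~\ref{th:fast1}, but now dragging the forbidden $1$-factors along through the composition. First I would iterate the directed composition identity of Theorem~\ref{th:di-direct}. Since $\circ$ is associative, bracketing the product from the left and applying that theorem $k-1$ times shows that the Markov graph of $\bbd(\mfS_1)\circ\cdots\circ\bbd(\mfS_k)$, carrying the merged forbidden $1$-factor $\F_1\cup\cdots\cup\F_k$, is the Cartesian product
$$
\bG\big(\bbd(\mfS_1)\big)\ \square\ \cdots\ \square\ \bG\big(\bbd(\mfS_k)\big),
$$
where each factor keeps its own forbidden $1$-factor $\F_i$. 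This presents the state space as a $K$-dimensional direct product with $K=k$.

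\medskip\noindent
Next I would certify that every coordinate chain mixes rapidly, which supplies conditions (2) and (3) of Theorem~\ref{th:directproduct}. Each pair $\big(\bbd(\mfS_i),\F_i\big)$ is an almost-half-regular restricted bipartite degree sequence $\bbd^{\F_i}$ whose forbidden set is a partial matching, so the Erd\H{o}s, Kiss, Mikl\'os and Soukup result, Theorem~\ref{th:MCMCk}(F), gives that the swap chain on $\bG(\bbd(\mfS_i))$, run with both $C_4$- and $C_6$-swaps, is irreducible, aperiodic, reversible and fast mixing.

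\medskip\noindent
The remaining and, I expect, hardest ingredient is condition (1): every move of the composed chain must lie entirely inside a single factor. This is the directed analog of Lemma~\ref{th:bi-component-wise}, and it suffices to check it for a single pairwise composition and then induct on $k$. For $C_4$-swaps the rigidity inserted by (\ref{eq:bieq}) settles it: among the inter-component pairs, those of type $\mfU_1$--$\mfW_2$ are forced to be edges by the block $K_{\mfU_1,\mfW_2}$, while those of type $\mfU_2$--$\mfW_1$ are forced to be non-edges, so a four-cycle alternating edge/non-edge cannot close across the two components. The genuinely new point, and where the difficulty concentrates, is that the same rigidity must also defeat the $C_6$-swap: I would verify that no six vertices distributed over the two components can carry the alternating pattern of three edges, three chord non-edges and three forbidden pairs that a $C_6$-move requires, once the forced $K_{\mfU_1,\mfW_2}$ edges and the merged forbidden $1$-factor $\F_1\cup\F_2$ are imposed. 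This is precisely the content behind the remark preceding Theorem~\ref{th:di-direct} that the forbidden edges tighten the swaps without affecting their locality, now made explicit for the six-cycle.

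\medskip\noindent
Once conditions (1)--(3) hold and $K=k=O(\poly(n))$ (the number of factors never exceeds the number of vertices), Theorem~\ref{th:directproduct} gives $\lambda_{2,M}=\big(K-1+\max_i\lambda_{2,i}\big)/K$ and hence $\frac{1}{1-\lambda_{2,M}}=O(\poly(n))$ for the whole chain, which is the asserted fast mixing.
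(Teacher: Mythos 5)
Your proposal is correct and follows essentially the same route as the paper, whose proof is a one-line invocation of the Meta-theorem via Theorem~\ref{th:directproduct} together with the Cartesian-product factorization; your elaboration of the coordinate mixing via Theorem~\ref{th:MCMCk}(F) and of the swap locality (including the $C_6$ case, which the paper disposes of by the remark that forbidden edges tighten but do not delocalize swaps) just makes explicit what the paper leaves implicit. You even cite the appropriate factorization result, Theorem~\ref{th:di-direct}, whereas the paper's proof of this theorem references Theorem~\ref{th:bi-direct} (its citations for Theorems~\ref{th:fast1} and~\ref{th:fast2} appear to be interchanged).
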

\begin{proof}
By Theorem \ref{th:directproduct}   and Theorem \ref{th:bi-direct} the Meta-Theorem applies for these setups.
\end{proof}

In the next section we will consider compositions from splitted bipartite degree sequences on $m+m$ vertices, as these bipartite sequences have the largest number of graphical realizations. It is then  important to observe that the splitted bipartite sequences and their compositions will generate split graph sequences (which are non-bipartite) that do not fall under the category (D) of Theorem \ref{th:MCMCk} \cite{G15}, and in this sense these form a novel class of irregular degree sequences
with proven fast mixing MCMC, beyond Theorem \ref{th:MCMCk} (D):

\begin{theorem}\label{th:nonGreenhill}
The split graph degree sequence generated from a graphical, splitted bipartite sequence on $m+m$ vertices with $m\geq 2$ does not obey the Greenhill condition $d_{max} \leq \frac{1}{4}\sqrt{M}$, where $d_{max}$ and $M$ are the maximum degree and the sum of degrees in the generated split graph, respectively.
\end{theorem}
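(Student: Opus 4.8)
The plan is to pass from the splitted bipartite sequence to its split-graph counterpart via $\Psi^{-1}$ (Lemma~\ref{th:biject}) and then directly estimate $d_{\max}$ and $M$. Writing the splitted bipartite sequence as $\bbd\langle \mfu,\mfw\rangle$ on $m+m$ vertices, the associated split degree sequence has primary class $U$ with $|U|=m$ and secondary class $W$ with $|W|=m$; by the definition of $\Psi$ the degrees are $\bd(U)=\mfu\oplus(m-1)$ and $\bd(W)=\mfw$. Thus every primary vertex $u_i$ has degree $\mfu_i+(m-1)$ (its $m-1$ clique edges together with its $\mfu_i$ edges to $W$), while every secondary vertex $w_j$ has degree $\mfw_j$. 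This translation step $\oplus(m-1)$ is where all the content lies; everything afterwards is an elementary estimate.

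First I would bound $M$. The split graph consists of the clique on $U$, contributing $\frac{m(m-1)}{2}$ edges, together with $e=\sum_i\mfu_i=\sum_j\mfw_j$ bipartite edges between $U$ and $W$. Since the bipartite part lives on $m+m$ vertices we have $e\le m^2$, whence
\[
M = m(m-1)+2e \le m(m-1)+2m^2 = 3m^2-m.
\]
Next I would bound $d_{\max}$ from below. Because $m\ge 2$ the clique is nonempty and each of its vertices has degree at least $m-1$, so $d_{\max}\ge m-1$ regardless of the bipartite edges (this already covers the degenerate case $\mfu=\mfw=\mathbf 0$).

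Finally I would combine the two estimates. It suffices to show $16\,d_{\max}^2>M$, since this is exactly the negation of $d_{\max}\le\frac14\sqrt{M}$. Using $d_{\max}\ge m-1$ and $M\le 3m^2-m$, it is enough to verify
\[
16(m-1)^2 > 3m^2-m, \qquad \mbox{i.e.} \qquad 13m^2-31m+16>0,
\]
for every integer $m\ge 2$. The quadratic $13m^2-31m+16$ has its larger root below $1.7$, so the inequality holds for all $m\ge 2$; chaining the bounds gives $16\,d_{\max}^2\ge 16(m-1)^2> 3m^2-m\ge M$, which is the claim. The only point to watch is that the lower bound $d_{\max}\ge m-1$ must genuinely beat the worst-case (complete-bipartite) value of $M$ at the smallest admissible $m$; since the margin at $m=2$ is already comfortable ($16$ versus $10$) and it widens with $m$, no sharper estimate is needed. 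In this sense there is no real obstacle beyond getting the degree shift $\oplus(m-1)$ correct and checking the boundary case $m=2$.
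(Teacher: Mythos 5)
Your proposal is correct and follows essentially the same route as the paper's own proof: both bound $d_{\max}\ge m-1$ from the clique on the primary class, bound $M\le m(m-1)+2m^2=3m^2-m$ via the worst case $K_{m,m}$, and reduce the claim to the quadratic inequality $13m^2-31m+16>0$ for $m\ge 2$. The only difference is cosmetic---you argue directly via $16\,d_{\max}^2>M$ while the paper derives a contradiction from assuming the Greenhill condition---and your explicit check of the degenerate and boundary cases is a harmless elaboration.
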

\begin{proof}
The split graph degree sequence is obtained by adding all the possible edges to the primary partition of the
splitted bipartite degree sequence. Accordingly, after the augmentation, we clearly must have:
$m-1 \leq d_{max}$ and $M \leq (m-1)m +2 m^2 = 3m^2-m$ (the upper bound is  realized when the splitted bipartite graph is $K_{m,m}$). The Greenhill condition would then imply that $m-1 \leq \frac{1}{4} \sqrt{3m^2-m}$, or equivalently, that $13 m^2 - 31 m +16 \leq 0$. This, however, is clearly violated for all $m \geq 2$.
\end{proof}

\subsection{Size estimates of degree classes with fast mixing MCMC}\label{subsec:count}

\noindent How large is this simply generated class of sequences compared to the number of the almost-half-regular bipartite sequences with the same total vertex numbers? For comparison, here we will only consider bipartite sequences on $m+m$ vertices, as this is the most numerous class. An almost-half-regular bipartite graph on $m+m$ vertices with $e$ total edges has one possible degree sequence on the regular-degree vertex class. The only conditions we have on the other vertex class is that no degree can exceed the number $m$ and the sequences are arranged non-increasingly.  Therefore
\begin{lemma}\label{th:number}
The number of non-increasing, graphical, almost-half-regular bipartite degree sequences on $m+m$ vertices is
$$
2{2m \choose m} - m^2 -1 = O\left(\frac{4^m}{\sqrt{m}}\right).
$$
{\rm (}For simplicity of calculation here we allow vertices with zero degree.{\rm )}
\end{lemma}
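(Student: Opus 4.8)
The plan is to reduce the count to a clean multiset enumeration together with an inclusion--exclusion correction, the only non-trivial input being a graphicality check.

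First I would set up the parametrization. In a bipartite degree sequence $\bbd=(\bd(U),\bd(W))$ on $m+m$ vertices, call it almost-half-regular if at least one of the two parts is almost-regular (all degrees within $1$ of each other). If, say, $\bd(U)$ is the almost-regular part, then it is completely determined by the total number of edges $e$: writing $e=qm+s$ with $0\le s<m$, exactly $s$ vertices of $U$ have degree $q+1$ and the remaining $m-s$ have degree $q$. Hence an almost-half-regular sequence with $U$ regular is specified entirely by its other (arbitrary) part $\bd(W)$, a non-increasing length-$m$ sequence whose entries lie in $\{0,1,\dots,m\}$ (no degree can exceed the $m$ vertices on the opposite side), with $\bd(U)$ then fixed by $e=\sum_i w_i$.

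The key step to verify is that every such pair is graphical, so that the enumeration is not cut down by further constraints. I would check this with the Gale--Ryser inequalities, sorting the arbitrary part $W$ non-increasingly and summing $\min(\cdot,k)$ over the almost-regular part $U$. For $k\le q$ each term equals $k$, so the right-hand side is $mk\ge\sum_{i=1}^k w_i$ because every $w_i\le m$; for $k\ge q+1$ each term is the full degree $u_j$, so the right-hand side equals $e=\sum_i w_i\ge\sum_{i=1}^k w_i$. Thus the inequalities hold in all cases and the sequence is always realizable. This is the one place where genuine content, rather than bookkeeping, enters, so I expect it to be the main obstacle --- though a short one.

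With graphicality secured, the count of almost-half-regular sequences whose $U$-part is almost-regular equals the number of non-increasing length-$m$ sequences with entries in $\{0,\dots,m\}$, i.e.\ the number of size-$m$ multisets drawn from $m+1$ symbols, which is ${2m \choose m}$. By the symmetric count for the $W$-part, a naive union of the two cases would give $2{2m \choose m}$, but the sequences in which both parts are simultaneously almost-regular are counted twice. Since a doubly-almost-regular sequence is determined by $e$ alone and $e$ ranges over $\{0,1,\dots,m^2\}$, there are exactly $m^2+1$ such overlaps. Inclusion--exclusion then yields $2{2m \choose m}-m^2-1$. Finally, Stirling's formula gives ${2m \choose m}=\Theta(4^m/\sqrt{m})$, and since $m^2+1=o(4^m/\sqrt{m})$ the total is $O(4^m/\sqrt{m})$, completing the proof.
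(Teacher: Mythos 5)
Your proposal is correct and follows essentially the same route as the paper: parametrize by the arbitrary side (a non-increasing sequence with entries in $\{0,\dots,m\}$, giving ${2m \choose m}$ choices, with the almost-regular side forced by the edge count $e$), double for the two role assignments, subtract the $m^2+1$ doubly-almost-regular sequences counted twice, and apply Stirling. The only difference is that you spell out the Gale--Ryser verification explicitly (correctly, via the cases $k\le q$ and $k\ge q+1$), whereas the paper simply asserts graphicality from Gale--Ryser.
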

\begin{proof}
The number of non-negative, non-increasing integer sequences on $m$ vertices with largest element at most $m$ is ${2m \choose m}$. For every such integer sequence there exists exactly one almost-half-regular degree sequence with the same sum of their degrees. This degree sequence pair will always be graphical due to the Gale-Ryser theorem.

Since we can assign the primary (secondary) roles to vertices on either side of the partition, we have a total of $2{2m \choose m}$ splitted bipartite graphs. We have to subtract the degree sequences counted twice. They are exactly those degree sequences that are almost regular on both sides of the partition. For every sum of degrees, there is exactly one such non-increasing degree sequence. As the sum of degrees might vary between $0$ and $m^2$, therefore, we have to subtract $m^2+1$ from $2{2m \choose m}$.
The asymptotic follows directly from the Stirling formula, $m! \sim \ \sqrt{2 \pi m} \left(\frac{m}{e}\right)^n$.
\end{proof}

If we take the composition of $n/m$ almost-half-regular bipartite degree sequences on $m+m$ vertices then we have slightly smaller number than we have almost-half-regular bipartite degree sequences on $n+n$ vertices (divisibility conditions implied). But we can do this for all possible $m$.  However, here the problem that might arise is that some (probably small number) of sequences will be enumerated more than once. One way of overstepping this issue is by using only indecomposable splitted bipartite degree sequences of arbitrary, but of not too big size. However the number of these objects is not known.

While the constructed class above is clearly much larger than the original class of almost-half-regular bipartite degree sequences with proven fast mixing MCMC, its size is not easily estimated. Instead, we will examine in detail another class, which, as we will show, is much larger than the set of almost-half-regular degree sequences: we will compose graphs from general splitted bipartite degree sequences of relatively small number of vertices, as a direct application of Lemma \ref{th:kicsi}.

\medskip\noindent

\chg{
Composing a large number of short splitted bipartite degree sequences into longer ones and enumerating the different composition results requires good control over the possible multiplicities to estimate the size of the resulting class - as it is clearly shown by the examples (\ref{eq:egy}),(\ref{eq:ketto}) and (\ref{eq:harom}). \\
\indent
Thus, instead, we will compose splitted bipartite degree sequences of fixed length, which have the following useful property:}
\begin{lemma}\label{th:constantunique}
Let $\mfd$ and $\mff$ be two splitted bipartite degree sequences, both are derived as compositions  of splitted bipartite degree sequences:
\begin{eqnarray*}
\mfd &=& \mfd_1 \circ \mfd_2 \circ \ldots \circ \mfd_n \quad \hbox{and} \\
\mff &=& \mff_1 \circ \mff_2 \circ \ldots \circ \mff_n,
\end{eqnarray*}
where \chg{all} $\mfd_i$ and $\mff_i$ \chg{are {\rm(}not necessarily indecomposable{\rm )}} splitted bipartite degree sequences on the same number of vertices (e.g., on $k+k$). If there exists an $i$ such that $\mfd_i \ne \mff_i$ then $\mfd \ne \mff$.
\end{lemma}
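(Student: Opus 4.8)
The plan is to prove the stated lemma in its contrapositive form, namely that the composition map $(\mfd_1,\dots,\mfd_n)\mapsto\mfd_1\circ\cdots\circ\mfd_n$ is injective on tuples of splitted bipartite degree sequences of a fixed common size $k+k$. First I would use the explicit description of composition in (\ref{eq:bieq}), extended by associativity, to record the degree of every vertex of $\mfd$ in closed form. In a composition, a primary vertex of the $i$-th factor is joined, beyond its within-factor neighbours, to the secondary class of every \emph{later} factor, while a secondary vertex of the $i$-th factor is joined to the primary class of every \emph{earlier} factor. Since each factor has exactly $k$ vertices per side, a primary vertex of factor $i$ acquires degree $a+(n-i)k$ and a secondary vertex of factor $i$ acquires degree $b+(i-1)k$, where $a,b\in\{0,1,\dots,k\}$ are the corresponding within-factor degrees; the bound $a,b\le k$ holds because a vertex of a factor can only be adjacent to the $k$ vertices on the opposite side of that same factor.

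The heart of the argument is that these shifts separate the factors into aligned bands. The primary degrees contributed by factor $i$ all lie in the interval $[(n-i)k,(n-i+1)k]$, so consecutive factors occupy consecutive bands that meet only at a shared endpoint (a multiple of $k$). I would then establish the key monotonicity statement: if one lists the factors in the order $1,2,\dots,n$ and writes, within each factor, its shifted primary within-degrees in non-increasing order, the resulting length-$nk$ word is \emph{already} globally non-increasing. This reduces to checking that the smallest entry contributed by factor $i$, which is at least $(n-i)k$, is no smaller than the largest entry contributed by factor $i+1$, which is at most $k+(n-i-1)k=(n-i)k$; both inequalities are immediate from $a\in[0,k]$. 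Consequently this word is exactly the non-increasing primary degree sequence of $\mfd$. The secondary side is symmetric, with the factor order reversed, since factor $n$ now carries the largest secondary shift $(n-1)k$.

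Inversion is then immediate and constitutes the recovery step. Given $\mfd$, chop its sorted primary degree sequence into $n$ consecutive blocks of length $k$ and subtract $(n-i)k$ from the $i$-th block to recover the sorted primary within-degrees of $\mfd_i$; chop the sorted secondary degree sequence likewise and recover factor $i$'s secondary within-degrees from the $(n-i+1)$-th block by subtracting $(i-1)k$. Because a splitted bipartite degree sequence is precisely the pair consisting of its non-increasing primary and secondary parts, these two recovered lists determine $\mfd_i$ completely for every $i$. Hence $\mfd$ determines the entire tuple $(\mfd_1,\dots,\mfd_n)$, which is the desired injectivity.

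I expect the only genuine obstacle to be the boundary coincidences — exactly the ambiguity behind the examples (\ref{eq:egy})--(\ref{eq:harom}), where a factor with a full within-degree $k$ and a later factor with within-degree $0$ generate identical composite degrees equal to a multiple of $k$, so that one cannot tell from the degree value alone which factor a boundary vertex belongs to. The monotonicity statement of the second paragraph is precisely what dissolves this: the coincident degrees are numerically equal, so the sorted degree sequence, and therefore its block decomposition, is insensitive to how one attributes them, and chopping still returns the correct within-degree multisets. The crucial hypothesis that every factor shares the same side-size $k$ is what keeps the bands aligned; were the sizes allowed to differ the bands could overlap in their interiors, the concatenated word would cease to be monotone, and the recovery would genuinely fail — which is why the lemma is stated only for factors of a fixed common length.
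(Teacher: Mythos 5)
Your proof is correct, but it takes a genuinely different route from the paper's. The paper argues via uniqueness of the canonical decomposition (Theorem~\ref{th:bi-decomp}): assuming $\mfd=\mff$, it refines the first differing factors $\mfd_i\ne\mff_i$ into their canonical indecomposable components, uses associativity of $\circ$ to view these as segments of the canonical decompositions of $\mfd$ and $\mff$, and observes that because all factors occupy windows of the same size $k+k$, the segments align position by position; a first differing indecomposable component then makes the two canonical decompositions differ, contradicting uniqueness. You instead invert the composition map directly and constructively: from the explicit edge description in (\ref{eq:bieq}) you derive that a primary vertex of factor $i$ has composite degree $a+(n-i)k$ with $a\in[0,k]$ (and dually on the secondary side), show the per-factor bands $[(n-i)k,(n-i+1)k]$ meet only at endpoints so the block concatenation is already globally non-increasing, and recover each $\mfd_i$ by chopping the sorted degree sequence into length-$k$ blocks and subtracting the shifts. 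Each approach buys something: the paper's proof is shorter once the Tyshkevich machinery is in place, and it needs only that the factors have equal \emph{total} vertex counts (the window alignment works even if the side splits varied), whereas your band argument genuinely requires equal side sizes $k+k$ --- consistent with the lemma's ``e.g., on $k+k$'' and with how it is applied in Lemma~\ref{lem:4.99}. In exchange, your proof is elementary and self-contained (no appeal to uniqueness of canonical decompositions), yields an explicit reconstruction algorithm rather than a bare injectivity statement, and makes transparent both why the fixed common length is essential and why the boundary coincidences behind examples (\ref{eq:egy})--(\ref{eq:harom}) cannot spoil the recovery: the sorted sequence, hence its block decomposition, is insensitive to how vertices with degrees at the shared band endpoints are attributed.
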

\begin{proof}
Assume, that on the contrary, $\mfd = \mff$ in spite the fact that $\mfd_i \ne \mff_i$. Consider now the canonical decomposition of $\mfd_i$ and $\mff_i$ for the smallest $i$ such that $\mfd_i \ne \mff_i$. Due to the associative rule of the $\circ$ operation, they can be written into the form
\begin{eqnarray*}
\mfd &=& \mfd_1 \circ \mfd_2 \circ \ldots \circ (\mfd_{i,1}  \ldots \circ \mfd_{i,j}) \circ \mfd_{i+1} \circ  \ldots \quad \hbox{and} \\
\mff &=& \mff_1 \circ \mff_2 \circ \ldots \circ (\mff_{i,1} \circ \ldots \circ \mff_{i,\ell}) \circ \mff_{i+1} \circ \ldots ,
\end{eqnarray*}
where $\mfd_i$ and $\mff_i$ are written in their canonical decomposition form. Since $\mfd_i \ne \mff_i$, there must be a first $k$ such that $\mfd_{i,k} \ne \mff_{i,k}$, though they are both in the same position of the decomposition for $\mfd$ and $\mff$. Thus, the canonical decompositions of $\mfd$ and $\mff$ differ, implying that $\mfd \ne \mff$, a contradiction.
\end{proof}
\begin{lemma}\label{lem:4.99}
The number of graphical splitted bipartite degree sequences that contain at most $6+6$ long splitted bipartite degree sequences in their canonical decompositions is $\Omega(4.99^n)$.
\end{lemma}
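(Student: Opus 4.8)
The plan is to fix the block size $k=6$ and build the required sequences as compositions of $n/6$ splitted bipartite degree sequences, each living on $6+6$ vertices. Let $N$ denote the number of graphical splitted bipartite degree sequences on $6+6$ vertices (an \emph{ordered} pair consisting of a primary degree vector and a secondary degree vector, each of length $6$ with entries in $\{0,\dots,6\}$, of equal sum and bigraphical). Assuming first that $6\mid n$, I would range over every ordered $(n/6)$-tuple $(\mfd_1,\dots,\mfd_{n/6})$ of such sequences and form the composition $\mfd=\mfd_1\circ\cdots\circ\mfd_{n/6}$, a graphical splitted bipartite degree sequence on $n+n$ vertices.

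Two properties of these compositions must be verified. First, each $\mfd$ actually lies in the advertised class, i.e.\ every component of its canonical decomposition has size at most $6+6$. This follows from the uniqueness of the canonical decomposition (Theorem \ref{th:bi-decomp}, the bipartite analogue of Corollary \ref{th:T1}): writing each $\mfd_i$ in its own canonical form and concatenating produces a composition of indecomposables which, by uniqueness, \emph{is} the canonical decomposition of $\mfd$; every indecomposable piece so obtained is a sub-component of some $\mfd_i$ and hence has size at most $6+6$. Second, the map $(\mfd_1,\dots,\mfd_{n/6})\mapsto\mfd$ is injective: since all the $\mfd_i$ live on the same number of vertices, Lemma \ref{th:constantunique} applies verbatim, so two tuples differing in even one coordinate yield different sequences. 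Hence the construction exhibits exactly $N^{n/6}$ distinct members of the class, which already suffices for a lower bound.

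It then remains to observe the arithmetic $N\ge 4.99^6$ (numerically $4.99^6\approx 15438.4$, so $N\ge 15439$ is enough), whence $N^{n/6}=(N^{1/6})^n\ge 4.99^n$. For $n$ not divisible by $6$ I would compose $\lfloor n/6\rfloor$ blocks and append one fixed small block on the residual $r+r$ vertices ($0\le r<6$); since the varying blocks all have the same size, injectivity is preserved (the differing prefix forces differing canonical decompositions of the whole), and the count changes by at most a bounded factor, so the conclusion $\Omega(4.99^n)$ is unaffected.

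The real content, and the step I expect to be the main obstacle, is establishing $N\ge 15439$. This is a finite but genuinely computational claim: one enumerates all ordered pairs of non-increasing vectors in $\{0,\dots,6\}^6$ of equal degree sum and tests each for bigraphicality via the Gale--Ryser condition (equivalently the bipartite Erd\H{o}s--Gallai inequalities). The number of equal-sum pairs is $\sum_s p_s^2$, where $p_s$ is the number of partitions fitting in a $6\times 6$ box, which is on the order of several tens of thousands; the task is to confirm that enough of them survive the graphicality test to clear $15439$ (in fact the computation gives $N^{1/6}$ strictly between $4.99$ and $5$, which is precisely why the base $4.99$ appears). Care is needed to use the correct ``splitted'' conventions --- the two classes are ordered, so $(\mathbf a,\mathbf b)$ and $(\mathbf b,\mathbf a)$ are counted separately, and zero-degree vertices in the primary class are allowed --- since it is exactly these conventions that pin down the value of $N$.
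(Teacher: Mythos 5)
Your proposal follows essentially the same route as the paper: compose $n/6$ blocks of graphical splitted bipartite degree sequences on $6+6$ vertices, invoke Lemma~\ref{th:constantunique} for injectivity, and bound $N^{n/6}$ from below (your added checks --- that the compositions lie in the advertised class via uniqueness of the canonical decomposition, and the divisibility case --- are details the paper leaves implicit). The one step you flag as open, the numerical bound $N\ge 15439$, the paper settles by citation: there are $15584$ such sequences on $6+6$ vertices (Brualdi--Ryser and OEIS A029894), and $15584 > 4.99^6 \approx 15438.4$, exactly as your arithmetic anticipates.
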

\begin{proof}
Consider the splitted bipartite degree sequences we can construct by composing graphical splitted bipartite degree sequences having exactly $6+6$ vertices. We know that there are $15584$ graphical degree sequences on $6+6$ vertices (see \cite{BR} and/or \cite{oeis}). Therefore we can construct $15586^{\frac{n}{6}} > 4.99^n$ different graphical degree sequences in this way due to Lemma~\ref{th:constantunique}. This is obviously less than all the possible cases, which proves the lemma.
\end{proof}
\begin{theorem}
Let $g(n)$ be the number of graphical splitted bipartite degree sequences on $n+n$ vertices for which we can prove rapid mixing using the Meta theorem and let $h(n)$ denote all almost-half-regular graphical splitted bipartite degree sequences on $n+n$ vertices. Then there exist a $c>1$ such that
$$g(n)/h(n) = \Omega\left(c^n\right).$$
\end{theorem}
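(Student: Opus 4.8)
The plan is to sandwich the ratio between the two counting estimates already in hand, so that the theorem becomes a one-line division. For the numerator I would invoke Lemma~\ref{lem:4.99} directly. Every sequence produced there is a composition of $n/6$ graphical splitted bipartite degree sequences, each living on $6+6$ vertices, so the composite lives on exactly $n+n$ vertices. Each such $6+6$ factor has at most $2^{36}$ realizations, a constant, hence its swap chain is fast mixing by Lemma~\ref{th:kicsi0}; since the number of factors $K=n/6$ is linear in $n$ and each factor relaxes in constant (hence $O(\mathrm{poly}(n))$) time, Theorem~\ref{th:directproduct} --- the Meta-theorem --- certifies that the whole composition mixes rapidly. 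Consequently all $\Omega(4.99^n)$ of these sequences, which are pairwise distinct by Lemma~\ref{th:constantunique}, are among those counted by $g(n)$, and so $g(n)=\Omega(4.99^n)$.

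For the denominator I would simply specialize Lemma~\ref{th:number} with $m=n$, which gives $h(n)=2\binom{2n}{n}-n^2-1=O(4^n/\sqrt{n})$. Dividing the two bounds then yields
$$
\frac{g(n)}{h(n)} = \Omega\left(\frac{4.99^n}{4^n/\sqrt{n}}\right) = \Omega\left(\sqrt{n}\,(4.99/4)^n\right),
$$
so any constant $c$ with $1<c\le 4.99/4$ (for instance $c=1.2$) works, which proves the claim.

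The only points requiring care are bookkeeping rather than substance, and I do not expect a genuine obstacle since all the combinatorial work has been front-loaded into Lemmas~\ref{th:number}, \ref{th:constantunique} and~\ref{lem:4.99}. First, the $6+6$ construction is cleanest when $6\mid n$; for the remaining residues one pads the last factor with a short almost-half-regular or single-edge component, which alters the base of the exponential only by a multiplicative constant and leaves $c>1$ intact. Second, one must ensure that the $\Omega(4.99^n)$ objects really are distinct degree sequences on $n+n$ vertices and not merely distinct ordered tuples of factors --- but this is exactly the content of Lemma~\ref{th:constantunique}, which guarantees that distinct tuples of equal-length factors compose into distinct sequences. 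With these two remarks in place, the proof reduces to the displayed division above.
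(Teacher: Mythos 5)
Your proposal is correct and takes essentially the same approach as the paper: the lower bound $g(n)=\Omega(4.99^n)$ comes from Lemma~\ref{lem:4.99} together with the Meta-theorem applied to compositions of $6+6$ factors, and dividing by $h(n)=O\left(4^n/\sqrt{n}\right)$ from Lemma~\ref{th:number} with $m=n$ gives any $1<c\le 4.99/4$. The only cosmetic differences are that you certify the factor chains via Lemma~\ref{th:kicsi0} (constant-size state spaces) where the paper's one-line proof cites Lemma~\ref{th:kicsi}, and that you spell out the padding for $6\nmid n$ and the distinctness bookkeeping, both of which the paper leaves implicit.
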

\vspace{-10pt}
\begin{proof}
This is a direct consequence of the Lemma  \ref{th:kicsi}, applied for splitted bipartite degree sequences,  and the decomposition \chg{result Theorem \ref{th:di-direct}}.
\end{proof}

\noindent Numerical calculations shows if we consider splitted bipartite degree sequences on $25+25$ vertices then we can write $10$ instead of $4.99.$ In general, we can state the following \chg{theorem}:
\begin{theorem}
Let $C$ be a constant s.t. the number of graphical bipartite degree sequences on $n+n$ vertices is $\Omega(C^n).$ Then for any $\epsilon>0$ there exist $\Omega((C-\epsilon)^n)$ bipartite degree sequences with fast mixing MCMC processes.
\end{theorem}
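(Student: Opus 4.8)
The plan is to realize the desired sequences as $\circ$-compositions of \emph{fixed-size} splitted bipartite blocks, exactly as in Lemma~\ref{lem:4.99}, but with the block size tuned to $\epsilon$. Write $b(k)$ for the number of graphical bipartite degree sequences on $k+k$ vertices; the hypothesis gives constants $A>0$ and $k_1$ with $b(k)\ge A\,C^{k}$ for all $k\ge k_1$. The first step is to fix, once and for all (independently of $n$), a block size $k_0$ large enough that $b(k_0)^{1/k_0}>C-\epsilon$. This is possible because $b(k)^{1/k}\ge A^{1/k}C\to C$ as $k\to\infty$, so $A^{1/k_0}C>C-\epsilon$ for all sufficiently large $k_0$; crucially $k_0$ depends only on $\epsilon$, $C$ and $A$, not on $n$.

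Next I view each of the $b(k_0)$ graphical bipartite degree sequences on $k_0+k_0$ vertices as a splitted bipartite degree sequence by declaring its first class primary. Since $k_0$ is a constant, every such block has at most $2^{k_0^{2}}=O(1)$ graphical realizations, so its swap chain is trivially fast mixing by Lemma~\ref{th:kicsi0}, with relaxation time bounded by a constant. For $n$ divisible by $k_0$ I then form all compositions of $n/k_0$ such blocks. By the iterated version of Theorem~\ref{th:bi-direct} the Markov graph of each composition is the Cartesian product of the $n/k_0$ block Markov graphs, so the Meta-theorem, i.e.\ Theorem~\ref{th:directproduct} applied with $K=n/k_0=O(n)$ factors each of constant relaxation time, shows that every such composite degree sequence is fast mixing.

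It remains to count and, crucially, to verify that distinct block-tuples yield distinct degree sequences. The latter is precisely the role of Lemma~\ref{th:constantunique}: because all blocks live on the same number $k_0+k_0$ of vertices, whenever two tuples $(\mfd_1,\dots,\mfd_{n/k_0})$ and $(\mff_1,\dots,\mff_{n/k_0})$ differ in some coordinate, their compositions $\mfd$ and $\mff$ differ (and since the primary side is fixed throughout, distinct splitted compositions are distinct as bipartite degree sequences). Hence the number of distinct fast-mixing bipartite degree sequences on $n+n$ vertices produced this way is $b(k_0)^{n/k_0}=\big(b(k_0)^{1/k_0}\big)^{n}>(C-\epsilon)^{n}$, giving the claimed $\Omega\big((C-\epsilon)^n\big)$. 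For $n$ not divisible by $k_0$ one appends a single fixed block on $r+r$ vertices with $r=n-k_0\lfloor n/k_0\rfloor$ (for instance the all-zero sequence), which changes the count by only the constant factor $b(k_0)^{-1}$ and preserves distinctness, since the common last block can be cancelled by uniqueness of the canonical decomposition.

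The main obstacle is the distinctness/multiplicity bookkeeping, since compositions can merge at block boundaries (as the examples~(\ref{eq:egy})--(\ref{eq:harom}) demonstrate) and a careless count would overcount. Using a \emph{constant} block size $k_0$ is what makes everything tractable: it simultaneously (i) keeps each block's realization count bounded so Lemma~\ref{th:kicsi0} applies, (ii) keeps the number of factors linear in $n$ so Theorem~\ref{th:directproduct} yields a polynomial relaxation time, and (iii) lets Lemma~\ref{th:constantunique} certify injectivity of the composition map. The only genuinely quantitative point is the limiting estimate $b(k_0)^{1/k_0}>C-\epsilon$, which follows immediately from the $\Omega(C^n)$ hypothesis.
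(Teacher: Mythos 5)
Your proposal is correct and takes essentially the same route as the paper's proof: you fix a constant block size $k_0$ with $b(k_0)^{1/k_0}>C-\epsilon$ (the paper's explicit choice is $n_0=\left\lceil \log(\alpha)/\log\bigl(\tfrac{C-\epsilon}{C}\bigr)\right\rceil$, which guarantees $b(n_0)\ge(C-\epsilon)^{n_0}$), compose $n/k_0$ such blocks, and invoke Lemma~\ref{th:constantunique} for distinctness together with the small-chain mixing bound and Theorem~\ref{th:directproduct} for rapid mixing, exactly as the paper does ``similarly to Lemma~\ref{lem:4.99}.'' Your added bookkeeping (the divisibility fix via an appended fixed block and the explicit injectivity argument) only fills in details the paper leaves implicit.
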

\begin{proof}
If the number of graphical degree sequences on $n+n$ vertices is $\Omega(C^n)$, then there exists an $\alpha>0$ such that for any $n$, the number of graphical degree sequences on $n+n$ vertices is greater than $\alpha C^n$. Let $n_0 = \left\lceil \frac{\log(\alpha)}{\log(\frac{C-\epsilon}{C})} \right\rceil$. Then the number of graphical sequences on $n_0 + n_0$ vertices is greater or equal than $(C-\epsilon)^{n_0}$. Similarly to Lemma~\ref{lem:4.99}, we can prove that there are $\Omega((C-\epsilon)^n)$ number of graphical splitted bipartite degree sequences that contain at most $n_0 + n_0$ long splitted bipartite degree sequences in their canonical decompositions. Their swap Markov chains will be all rapidly mixing.
\end{proof}

If one could prove the following conjecture than we could prove an even slightly stronger statement.
\begin{conjecture}\label{con:logconvex}
The number of bipartite graphical degree sequences on $n+n$ vertices is a logconvex function of $n$.
\end{conjecture}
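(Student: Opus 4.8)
The plan is to recast the statement as the inequality $B(n)^2 \le B(n-1)\,B(n+1)$, where $B(n)$ denotes the number of graphical bipartite degree sequences on $n+n$ vertices; equivalently, one shows that the ratios $B(n+1)/B(n)$ are non-decreasing. The cleanest route to such an inequality is the \emph{injection method} for combinatorial logconvexity: it suffices to construct an injection
\[
\Phi:\ G(n)\times G(n)\ \hookrightarrow\ G(n-1)\times G(n+1),
\]
where $G(k)$ is the set counted by $B(k)$. First I would fix a canonical encoding of each sequence in $G(n)$ as an ordered pair of partitions $(\lambda,\mu)$ fitting in the $n\times n$ box (at most $n$ parts, each at most $n$) satisfying the Gale--Ryser dominance condition $\lambda\preceq\mu^{*}$, so that membership in $G(n)$ becomes a purely order-theoretic predicate.

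Given two encoded sequences $D_1,D_2\in G(n)$, the map $\Phi$ would first order the pair by a fixed total order (say reverse-lexicographic on the concatenated, sorted degree list), and then perform a \emph{unit transfer}: enlarge the larger object to an element of $G(n+1)$ by appending a new coordinate on each side carrying a recorded amount of degree, while simultaneously shrinking the smaller object to an element of $G(n-1)$ by deleting a coordinate on each side and absorbing the removed degree. The data of which coordinate was moved, together with the total order, would be recorded implicitly so that $\Phi$ can be inverted on its image, giving injectivity. The choice of \emph{which} part to transfer must be made canonically (e.g.\ always the current minimal admissible part) precisely so that the inverse can reconstruct both the split and the reassembly.

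The hard part will be verifying that both outputs remain graphical after the box size changes, because the size change is exactly where the two Gale--Ryser constraints interact most awkwardly: passing from $n$ to $n+1$ relaxes the degree cap but forces a new row whose dominance obligations must be met, whereas passing from $n$ to $n-1$ tightens the cap and can destroy $\lambda\preceq\mu^{*}$ if a large part is not handled carefully. Guaranteeing that a single canonical unit transfer simultaneously preserves $\lambda\preceq\mu^{*}$ on the enlarged object, restores it on the shrunken one, and does so reversibly is the crux; I expect this to require a case analysis driven by whether the extremal part of the smaller object already saturates the cap, together with a repair step that shifts a unit between adjacent parts to re-establish dominance.

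As a fallback, if the explicit injection proves too delicate, I would pursue an analytic route. The counts $B(n)$ are tabulated (cf.\ \cite{BR,oeis}), so I would first search for a holonomic (P-recursive) recurrence $\sum_{j} p_j(n)\,B(n+j)=0$ with polynomial coefficients; given such a recurrence, logconvexity can often be established by an induction that propagates $B(n)^2\le B(n-1)B(n+1)$ through the recurrence once the leading-coefficient signs are controlled, in the spirit of automated inequality-proving for P-recursive sequences. There the obstacle shifts from combinatorics to first certifying the recurrence and then showing it transmits logconvexity, which typically demands a strengthened inductive hypothesis (a two-sided ratio bound) and careful treatment of the base cases.
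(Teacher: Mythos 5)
You should first be aware that the paper contains no proof of this statement: it is posed explicitly as an open problem (Conjecture~\ref{con:logconvex}), and is used only as the hypothesis of the conditional theorem that follows it (``If Conjecture~\ref{con:logconvex} is true, then\dots''). So there is no ``paper's proof'' to match; a correct argument here would be new mathematics going beyond the paper. Your proposal, however, is not such an argument --- it is a plan that stalls at exactly the points you yourself flag as the crux, in both of its branches.

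Concretely, for the injection route: log-convexity is indeed equivalent to an injection $G(n)\times G(n)\hookrightarrow G(n-1)\times G(n+1)$, and the Gale--Ryser encoding as pairs $(\lambda,\mu)$ of equal-weight partitions in the $n\times n$ box with $\lambda\preceq\mu^{*}$ is the right normal form. But the entire content of the conjecture is concentrated in the two steps you defer. First, the shrinking map is not merely delicate but can fail outright on natural candidates: an element of $G(n)$ may have every part positive and parts equal to $n$ on both sides (e.g.\ near $K_{n,n}$), so deleting a coordinate and ``absorbing'' its degree must both respect the tightened cap $n-1$ and re-establish dominance, and you give no rule for which this is verified --- ``I expect this to require a case analysis'' is precisely the unproved statement. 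Second, injectivity as described is broken at the level of bookkeeping: you sort the input pair by a total order and claim the order and the transferred coordinate are ``recorded implicitly,'' but the output is just a pair of degree sequences in $G(n-1)\times G(n+1)$ with no room to store that bit or that index; unless the construction is intrinsically asymmetric and the transfer canonically recoverable from the outputs alone, $\Phi$ collapses ordered pairs and the counting inequality is lost. For the analytic fallback: no holonomic recurrence for $B(n)$ is known, and fitting one to the tabulated values of \cite{BR,oeis} certifies nothing --- a conjectured recurrence plus a sign-propagation induction is a conjecture resting on a conjecture. In short, both routes are reasonable directions of attack, but neither is carried past the step where the actual difficulty lives, so the statement remains exactly as open as the paper leaves it.
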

\begin{theorem}
If Conjecture~\ref{con:logconvex} is true, then for any $\epsilon > 0$, there exists a polynomial function $poly(n)$, such that
$$\frac{f(n)}{g(n)} = O((1+\epsilon)^n)$$
where $f(n)$ denotes the number of bipartite graphical degree sequences on $n+n$ vertices and $g(n)$ denotes the number of bipartite  graphical degree sequences for which the second largest eigenvalue $\lambda_2$ of their swap Markov chain satisfies
$$\frac{1}{1-\lambda_2} < \mbox{poly}(n).$$
\end{theorem}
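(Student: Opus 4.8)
The plan is to sandwich the ratio $f(n)/g(n)$ between an exponential \emph{lower} bound on $g$ coming from our composition construction and an exponential \emph{upper} bound on $f$ supplied by the conjectured logconvexity, arranged so that the two bases agree in the limit. Write $a_n=\log f(n)$, so that Conjecture~\ref{con:logconvex} asserts that $(a_n)$ is convex. Normalizing by $f(0)=1$ (there is a single empty sequence), so $a_0=0$, convexity forces the chord slopes $a_n/n$ to be non-decreasing, and hence $f(n)^{1/n}$ is non-decreasing in $n$. On the other hand the trivial count behind Lemma~\ref{th:number} (each side of the partition is a non-increasing integer sequence of length $n$ bounded by $n$) gives $f(n)\le {2n \choose n}^2 = O(16^n)$, so $f(n)^{1/n}$ is bounded above. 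Therefore $C:=\lim_{n\to\infty} f(n)^{1/n}$ exists and is finite, and since a non-decreasing sequence never exceeds its limit we obtain the key estimate $f(n)\le C^{n}$ for every $n$.

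For the lower bound on $g$ I would invoke the preceding theorem. Because $f(n)^{1/n}\to C$, for any $\delta>0$ the count of bipartite graphical sequences on $n+n$ vertices is $\Omega((C-\delta)^n)$; feeding the growth rate $C-\delta$ into the preceding theorem — whose construction composes short, fixed-length splitted bipartite blocks, certifies their rapid mixing through Lemma~\ref{th:kicsi} and Theorem~\ref{th:di-direct}, and guarantees distinctness of the outputs by Lemma~\ref{th:constantunique} — yields $\Omega((C-2\delta)^n)$ bipartite degree sequences whose relaxation time is bounded by one fixed polynomial. Indeed, that polynomial may be taken linear, $c_\delta\, n$: the number of composed blocks is $O(n)$ and each block, being of fixed size, has a chain that relaxes in constant time, so Theorem~\ref{th:directproduct} caps the total relaxation time at $O(n)$. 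Taking this linear function as the $poly(n)$ required by the statement, we conclude $g(n)=\Omega((C-2\delta)^n)$.

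Combining the two bounds gives $f(n)/g(n)=O\left((C/(C-2\delta))^n\right)$. Given $\epsilon>0$, I would then fix $\delta$ small enough that $C/(C-2\delta)\le 1+\epsilon$ (for instance any $\delta\le \frac{\epsilon C}{2(1+\epsilon)}$), which delivers $f(n)/g(n)=O((1+\epsilon)^n)$, as claimed.

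I expect essentially all of the content to sit in the first paragraph, in extracting $f(n)\le C^{n}$ from logconvexity; everything else is either bookkeeping or a citation of the preceding theorem. This is precisely the step the conjecture is meant to unlock. A naive attempt to obtain a matching upper bound by a Fekete/superadditivity argument fails, because composing blocks of \emph{different} lengths can collapse distinct tuples to the same sequence — exactly the multiplicity phenomenon displayed in equations (\ref{eq:egy}), (\ref{eq:ketto}) and (\ref{eq:harom}) — so $f$ is not known to be supermultiplicative, and the clean two-sided control of its growth rate has to be imported from the assumed logconvexity rather than proved directly.
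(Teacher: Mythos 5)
Your proposal is correct and follows essentially the same route as the paper's own proof: use logconvexity plus the $O(16^n)$ bound to extract a limiting growth rate $C$ for $f(n)$, obtain the lower bound on $g(n)$ by composing fixed-length splitted bipartite blocks (via the preceding theorem, Lemma~\ref{th:constantunique} and Lemma~\ref{th:kicsi}), and compare the two exponentials by choosing the slack small enough. Your only departures are cosmetic improvements: phrasing convexity via non-decreasing chord slopes $a_n/n$ gives the slightly cleaner one-sided bound $f(n)\le C^n$ (the paper settles for $f(n)=O((C+\epsilon')^n)$ from an informal ``derivative'' argument), and you make explicit the single linear polynomial $c_\delta n$ bounding all relaxation times, which the paper leaves implicit.
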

\begin{proof}
If $f(n)$, the number of graphical degree sequences on $n+n$ vertices is logconvex, then the derivative of its logarithm has a limit as $n$ tends to infinite. This is because logconvexity means that the derivative of $\log(f(n))$ is monotonically increasing. However, $f(n) = O(16^n)$, thus the derivative is upper bounded, and any upper bounded, monotoniously increasing series has a limit. (We have $f(n) = O(16^n)$ because the number of pairs of degree sequences on $n+n$ vertices with maximum degree $n$ is $O\left(\frac{16^n}{n}\right)$ -- see Lemma \ref{th:number} -- and the number of graphical degree sequences is less.) Let
$$C= \lim_{n \rightarrow \infty} \frac{\partial}{\partial n} \log(f(n)).$$
Then for any $\epsilon'>0$, $f(n) = O((C+\epsilon')^n)$ and $f(n) = \Omega((C-\epsilon')^n)$. Since $f(n) = \Omega((C-\epsilon')^n))$, there exists an $n_0$ constant such that $f(n_0)>(C-2\epsilon')^n$. Therefore the number of graphical splitted bipartite degree sequences on $n+n$ vertices that contain at most $n_0 + n_0$ long splitted bipartite degree sequences in their canonical decompositions is $g(n)=\Omega(C-2\epsilon')^n)$. What follows is that
$$\frac{f(n)}{g(n)} = O\left(\frac{(C+\epsilon')^n}{(C-2\epsilon')^n}\right).$$
If we set $\epsilon'$ such that
$$\frac{C+\epsilon'}{C-2\epsilon'} = 1+\epsilon$$
holds, the theorem follows.
\end{proof}

Finally, it is clear that in our construction we can use almost-half-regular splitted bipartite sequences, difference graph sequences, small size splitted bipartite sequences mixed in any order to generate bipartite degree sequences with fast mixing Markov Chains.

\section{MCMC sampling on degree spectra matrix problems}\label{sec:spectra}

Let us recall that in the graph $\G$ the {\em degree spectrum} of vertex $v$ is the vector $\bs_{\G}(v)$ where $\bs_{\G}(v)_i$ denotes the number of neighbors of $v$ that have degree $i$. The {\em degree spectra matrix} $M(\G)$ consists of the degree spectra of the vertices as columns. One can ask whether an integer matrix can be the degree spectra matrix of a graph. If the answer is affirmative, then the matrix is {\em graphical}.

The degree spectrum of a given vertex automatically defines its degree, therefore this notion can be thought as a specialization of the degree sequences. Indeed, in general there are several degree spectra matrices corresponding to the same degree sequence and their individual realization sets partition the set of all realizations of the degree sequence into classes.

To our best knowledge, the notion was \chg{first} introduced in \cite{JDM} and was further studied in \cite{MCMC-JDM} to deal with different aspects of the Joint Degree Matrix problem. It is easy to decide whether a degree spectra matrix is graphical (see \cite[Theorem 3]{sampling-JDM}): For all pairs $1 \le i, j \le \Delta(\G)$ denote $\G_{i,j}$ the induced subgraph spanned degree-$i$ and degree-$j$ vertices. (Here $i=j$ can happen, in which case we have a simple graph instead of a bipartite graph.) Then
$$
\bbd(\G_{i,j}) = \big (( \bs_{\G}(u)_j : d(u)=i) (\bs_{\G}(w)_i : d(w)=j) \big )
$$
is a bipartite (simple) degree sequence.
\begin{theorem}[Bassler, Del Genio, Erd\H{o}s, Mikl\'os and Toroczkai 2015]\label{th:BDEMT}
The degree spectra matrix $M$ is graphical iff all its component (bi- and uni-)partite degree sequences are graphical.
\end{theorem}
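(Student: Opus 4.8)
The plan is to prove Theorem~\ref{th:BDEMT} by establishing both directions of the ``iff'' separately, with the forward direction being essentially trivial and the backward (reconstruction) direction carrying all the content. First I would set up the key observation that a degree spectra matrix $M$ imposes constraints that are \emph{decoupled} across degree-class pairs: the entries of $M$ describing how many degree-$j$ neighbors each degree-$i$ vertex has determine precisely the bipartite degree sequence $\bbd(\G_{i,j})$ of the induced subgraph between the degree-$i$ and degree-$j$ classes (or, when $i=j$, a uni-partite degree sequence on the degree-$i$ class). The crucial structural fact is that these subgraphs live on \emph{disjoint edge sets}: an edge of any realization joins a degree-$i$ vertex to a degree-$j$ vertex for exactly one unordered pair $\{i,j\}$, so the full graph $\G$ decomposes as the edge-disjoint union $\bigcup_{i \le j} \G_{i,j}$ over all degree-class pairs.

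For the forward direction, if $M$ is graphical then some realization $\G$ exists; restricting $\G$ to each degree-class pair immediately exhibits a realization of each $\bbd(\G_{i,j})$, so every component (bi- or uni-)partite degree sequence is graphical. For the backward direction I would argue by \emph{gluing}: assume each $\bbd(\G_{i,j})$ is graphical and pick an arbitrary realization $H_{i,j}$ of each. The candidate graph $\G$ is then the union of all the $H_{i,j}$ on the common labeled vertex set $V$, where the degree classes are predetermined by the column-sums of $M$ (the degree spectrum of a vertex determines its degree, as noted in the text). The heart of the argument is to verify that this union indeed realizes $M$ as a degree spectra matrix. Because the edge sets of the $H_{i,j}$ are pairwise disjoint by construction (each only places edges between its two designated degree classes), the number of degree-$j$ neighbors of a fixed degree-$i$ vertex $v$ in $\G$ equals its degree inside $H_{i,j}$, which is exactly the prescribed entry $\bs_{\G}(v)_j$ from $M$. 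Summing over $j$ recovers the total degree $d(v)=i$, so the assumed degree classes are consistent, and every column of the reconstructed matrix matches $M$.

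The step I expect to be the main obstacle — really the only nontrivial point — is confirming self-consistency of the degree labels: one must check that in the glued graph $\G$ each vertex truly has the degree that its column of $M$ claims, so that the induced subgraph $\G_{i,j}$ we started from coincides with the $H_{i,j}$ we inserted. This requires that the column-sum of each vertex's spectrum equals the degree by which it was classified, together with the mutual compatibility of the two degree sequences appearing in each bipartite $\bbd(\G_{i,j})$ (the number of degree-$j$ endpoints seen from the $i$-side must match the number of degree-$i$ endpoints seen from the $j$-side, i.e.\ the two marginals of $M$ across the $\{i,j\}$ block agree). These consistency conditions are built into what it means for $M$ to be an integer matrix indexed as a degree spectra matrix, so once they are made explicit the gluing goes through and no further compatibility between distinct blocks is needed precisely because the blocks are edge-disjoint. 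I would therefore close by noting that the edge-disjointness is what makes the problem \emph{separable} into independent per-block graphicality questions, exactly mirroring the role that Lemma~\ref{th:bi-component-wise} plays in the swap setting.
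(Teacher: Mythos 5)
Your proof is correct, and it is essentially the intended argument: the forward direction by restricting a realization to the degree classes, and the backward direction by gluing arbitrary realizations $H_{i,j}$ of the blocks on the common vertex set, using the facts that the blocks are automatically edge-disjoint (each connects a distinct pair of degree classes) and that each vertex's degree is recovered as its column sum in $M$. Note that the present paper states Theorem~\ref{th:BDEMT} without proof, citing \cite[Theorem 3]{sampling-JDM} for it; the one loose phrase in your write-up is that the marginal agreement across each $\{i,j\}$ block is \emph{not} ``built into'' an arbitrary integer matrix indexed as a degree spectra matrix, but rather follows from the assumed graphicality of the component bipartite sequences (equal degree sums on the two sides, and even sum when $i=j$) --- which is exactly the hypothesis of the backward direction, so the argument stands as given.
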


This clearly refers the fact that the set of all realizations is connected under the swap operation, i.e., to the {\em irreducibility}  of the space of all realizations. Indeed, each swap is completely within one of the component graphs, therefore the Markov graph of all realizations is clearly partitioned into these smaller Markov Graphs. Finally, the paper also developed a polynomial time algorithm to determine all possible degree spectra matrices, which are compatible with the degree sequence of the graph.

In \cite{BD15} Barrus and Donovan reintroduced the notion of degree spectra under the name of {\em neighborhood degree list} and they reproved Theorem \ref{th:BDEMT} and also the connectedness (irreducibility) result. However, the main results in their paper are on the uniqueness of realizations (up to isomorphism) and their connections to threshold graphs.

\begin{theorem}\label{th:spectra}
Let $\bd$ be a degree sequence and assume that for a compatible degree spectra matrix $M$, all component graphs admit fast, swap-based MCMC samplers. For example, the bipartite graphs are almost-half regular and the simple graphs are almost-regular or irregular but satisfy the Greenhill condition (D) of Theorem \ref{th:MCMCk}. Then the corresponding realizations of the degree spectra matrix all admit fast mixing swap-based MCMC sampling processes.
\end{theorem}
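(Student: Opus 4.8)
The plan is to exhibit the realization space of a fixed degree spectra matrix $M$ as a Cartesian product of the realization spaces of its component degree sequences $\bbd(\G_{i,j})$, and then to apply the factorization theorem, Theorem~\ref{th:directproduct}, exactly as in the Meta-theorem. The single fact that makes this work is that every $M$-preserving swap lives inside one component $\G_{i,j}$. First I would verify this locality directly. A swap $ac,bd \Rightarrow bc,ad$ automatically preserves all four vertex degrees, so the issue is only whether it preserves each of the degree spectra $\bs(a),\bs(b),\bs(c),\bs(d)$. Since $a$ trades its neighbor $c$ for $d$ (and $b$ trades $d$ for $c$), the spectra of $a$ and $b$ are unchanged only if $d(c)=d(d)$; symmetrically, the spectra of $c$ and $d$ are unchanged only if $d(a)=d(b)$. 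Writing $d(a)=d(b)=i$ and $d(c)=d(d)=j$, all four participating pairs lie between the degree-$i$ and degree-$j$ classes, i.e. inside $\G_{i,j}$ (a simple graph when $i=j$, bipartite when $i\neq j$). This is precisely the locality statement recorded after Theorem~\ref{th:BDEMT}, and it shows that the Markov graph $\bG(M)$ is the Cartesian product of the Markov graphs $\bG(\bbd(\G_{i,j}))$ over all pairs $1\le i \le j \le \Delta(\G)$.

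Next I would set up the lazy swap chain on $\bG(M)$ so that each step first selects a component index $(i,j)$ uniformly at random and then attempts a swap within the corresponding $\G_{i,j}$. By the locality above, every such move changes exactly one coordinate of the product, each coordinate is chosen with equal probability, and the per-coordinate transition probabilities do not depend on the states of the other components; this is exactly condition~(1) of Theorem~\ref{th:directproduct}. Conditions~(2) and~(3) are then immediate: each factor chain is the ordinary swap chain on the realizations of the bipartite (or simple) component sequence $\bbd(\G_{i,j})$, hence irreducible, aperiodic and reversible, and by the hypothesis of the theorem together with the relevant cases of Theorem~\ref{th:MCMCk} (half- or almost-half-regular bipartite, almost-regular or Greenhill-condition simple) it is rapidly mixing. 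The number of factors is $O(\Delta(\G)^2)=O(n^2)$, hence polynomial in $n$, so with $K=O(n^2)$ and each factor's relaxation time polynomially bounded, Theorem~\ref{th:directproduct} yields a polynomial relaxation time for the chain on $\bG(M)$, which is the assertion.

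The hard part is really only the first step, the locality argument, since everything downstream is a verbatim application of Theorem~\ref{th:directproduct}. The subtlety to watch for is ruling out $M$-preserving moves that would straddle two components and thereby break the product structure; the degree-matching computation above settles this by forcing all four endpoints into a single pair of degree classes. A minor bookkeeping point is that the ``natural'' swap chain which picks two random edges does not select components with equal probability, so I would either adopt the uniform-component formulation described above or invoke the restricted swap chain of \cite{MCMC-JDM}; either way the mixing times agree up to the polynomial factor $K$, and no modification of the factorization theorem is needed.
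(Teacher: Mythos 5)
Your proposal is correct and takes essentially the same route as the paper, whose own justification of Theorem~\ref{th:spectra} is the (largely implicit) observation recorded after Theorem~\ref{th:BDEMT} --- that every swap preserving the degree spectra matrix lies entirely within a single component $\G_{i,j}$ --- followed by an application of Theorem~\ref{th:directproduct} exactly as in the Meta-theorem. Your explicit degree-matching computation ($d(a)=d(b)=i$, $d(c)=d(d)=j$) simply spells out the locality fact the paper cites, so no further changes are needed.
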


An example for this kind of degree spectra matrix has already been found in \cite[Corollary 5]{JDM}. Namely,  that paper proved the following result:
\begin{theorem}[Czabarka, Dutle, Erd\H{o}s and Mikl\'os \cite{JDM}] \label{th:balanced}
For any graphical Joint Degree Matrix there exist degree spectra matrices for which all component graphs are almost regular or almost semi-regular.
\end{theorem}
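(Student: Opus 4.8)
The plan is to build a single \emph{balanced} degree spectra matrix directly from the combinatorial data of the Joint Degree Matrix, rather than starting from an arbitrary realization. Write the JDM as the array of edge counts $e_{i,j}$ between the $n_i$ vertices of degree $i$ and the $n_j$ vertices of degree $j$, with the diagonal entry $e_{i,i}$ counting the edges inside the degree-$i$ class. For each fixed degree value $i$ I would form a ``spectrum table'' whose rows are indexed by the $n_i$ vertices of degree $i$ and whose columns are indexed by the degree values $j$; a valid degree spectra matrix is exactly a non-negative integer filling of all these tables in which every row of table $i$ sums to $i$ and the $j$-th column of table $i$ sums to $e_{i,j}$ when $j\neq i$ and to $2e_{i,i}$ when $j=i$. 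The fractional filling in which every entry of column $j$ equals the constant $e_{i,j}/n_i$ (respectively $2e_{i,i}/n_i$ on the diagonal) satisfies all of these marginals: the column sums are correct by definition, and summing them shows each row sums to $\frac{1}{n_i}(i\,n_i)=i$, since $\sum_{j\neq i}e_{i,j}+2e_{i,i}$ is the total degree $i\,n_i$ emanating from the class.

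First I would round this fractional table to integers. The constraints ``fix every row sum, fix every column sum, keep each entry between its floor and ceiling'' define a transportation polytope whose constraint matrix is a bipartite vertex--edge incidence matrix and hence totally unimodular; all prescribed row sums, column sums and entry bounds are integers and the polytope is non-empty (it contains the fractional filling), so it has an integer point. Carrying out this rounding independently in each degree class yields an integer degree spectra matrix $M$ compatible with the JDM. The decisive gain is automatic: because every column of the fractional table was constant, each entry of column $j$ after rounding lies in $\{\lfloor e_{i,j}/n_i\rfloor,\lceil e_{i,j}/n_i\rceil\}$, so within each class the numbers $\{\bs_{\G}(v)_j : d(v)=i\}$ differ by at most $1$. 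In the component language of Theorem~\ref{th:BDEMT} this says the degree-$i$ side of every off-diagonal block $\G_{i,j}$ is almost regular (and, symmetrically, rounding class $j$ makes its degree-$j$ side almost regular as well), while the diagonal simple graph $\G_{i,i}$ carries an almost-regular degree sequence. Hence all component graphs are almost regular or almost semi-regular, exactly as the statement demands.

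The remaining and genuinely delicate step is to confirm that this balanced $M$ is \emph{graphical}, for which by Theorem~\ref{th:BDEMT} it suffices to check each component sequence separately. For an off-diagonal block one must verify Gale--Ryser for a bipartite sequence that is almost regular on both sides with total $e_{i,j}$: writing $d=\lfloor e_{i,j}/n_i\rfloor$, for every threshold $t\le\lfloor e_{i,j}/n_j\rfloor$ the left partial sum is at most $t(d+1)$ while the Gale--Ryser right-hand side equals $n_j t$, and $d+1\le n_j$ holds because $e_{i,j}\le n_i n_j$ (the sole boundary case $e_{i,j}=n_i n_j$ is the complete bipartite block, where equality is attained); for larger $t$ the right-hand side already equals $e_{i,j}$ and dominates trivially. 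For the diagonal block the degree sum $2e_{i,i}$ is automatically even, and the maximum degree $\lceil 2e_{i,i}/n_i\rceil$ is at most $n_i-1$ because $e_{i,i}\le {n_i \choose 2}$, so the almost-regular sequence meets the Erd\H{o}s--Gallai inequalities.

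The single main obstacle is thus this graphicality verification of the doubly-balanced bipartite blocks, together with the bookkeeping of the diagonal factor of two and its parity; everything else is the standard integrality of a transportation polytope. Once the component sequences are certified graphical, Theorem~\ref{th:BDEMT} assembles their realizations into a genuine graph whose degree spectra matrix is precisely the balanced $M$ constructed above. An alternative, purely constructive route would start from any realization of the JDM and repeatedly apply degree-preserving rebalancing moves that shift a unit of spectrum between two columns of one class until no within-class imbalance of size $\geq 2$ remains; I expect the rounding argument to be cleaner, but the rebalancing variant avoids the Gale--Ryser computation entirely by never leaving the set of realizations. Either way, combined with Theorem~\ref{th:spectra} the resulting balanced $M$ is exactly the kind of degree spectra matrix whose existence is asserted.
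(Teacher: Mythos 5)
Your construction is correct, but note first that the paper you were asked to match contains no proof of this statement at all: Theorem~\ref{th:balanced} is imported verbatim from \cite{JDM} (their Corollary~5), where balancedness is obtained by working inside the space of realizations of the JDM --- essentially the ``rebalancing'' route you sketch at the end, in which one starts from an arbitrary realization and applies JDM-preserving swaps to reduce any within-class spectrum imbalance of size at least $2$. Your primary route is genuinely different: you never touch a realization until the last step, instead writing down the fractional balanced degree spectra matrix (column $j$ of class $i$ constantly $e_{i,j}/n_i$, or $2e_{i,i}/n_i$ on the diagonal), rounding it by total unimodularity of the transportation polytope with floor/ceiling box constraints, certifying each component sequence directly --- your Gale--Ryser check for the doubly almost-regular off-diagonal blocks is complete, since the two ranges $t\le\lfloor e_{i,j}/n_j\rfloor$ and $t\ge\lfloor e_{i,j}/n_j\rfloor+1$ exhaust all thresholds, and you correctly isolate the boundary case $e_{i,j}=n_in_j$ where $d+1\le n_j$ fails but the block is forced to be $K_{n_i,n_j}$ --- and then assembling via Theorem~\ref{th:BDEMT}. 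What each approach buys: yours is stronger in that it re-derives, in one stroke, the sufficiency of the arithmetic JDM conditions ($e_{i,j}\le n_in_j$, $e_{i,i}\le\binom{n_i}{2}$, $\sum_{j\ne i}e_{i,j}+2e_{i,i}=i\,n_i$) together with a balanced witness, and it is algorithmically explicit; the original swap argument buys self-containedness (it predates and does not need the factorization Theorem~\ref{th:BDEMT}, on which your assembly step genuinely leans) and never has to verify graphicality, since it stays inside the realization space throughout. Two cosmetic remarks: the transportation-polytope machinery is heavier than needed --- with constant fractional columns you could simply choose $e_{i,j}\bmod n_i$ rows per column to receive the ceiling, the row-sum feasibility being exactly the transportation feasibility you invoke --- and the diagonal case deserves the one-line Erd\H{o}s--Gallai verification you gesture at (an almost-regular sequence with even sum $2e_{i,i}$ and maximum entry $\lceil 2e_{i,i}/n_i\rceil\le n_i-1$ is graphical), but neither is a gap.
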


\noindent
Recall that, a bipartite graph is {\em semi-regular} if in both classes the vertices have the same degree (but the values can be different between the partition classes). In  \cite{JDM} this is called a {\em balanced realization}. It is also clear that almost-half-regularity is a much less severe property then almost-semi-regularity. In fact, from any almost-semi-regular bipartite graph pair $\G_{i,j}$ and $\G_{i,\ell}$ one can easily make several almost-half-regular realizations with swap operations which keep the Joint Degree Matrix requirements but destroy almost-semi-regularity.

{\section{Conclusions}\label{sec:discuss}

In summary, by exploiting an earlier result obtained by us on composition Markov chains for direct-product spaces combined with the split graph decompositions introduced by Tyskevich and a recent result of Barrus and West we could significantly extend the class of bipartite degree sequences for which the KTV conjecture holds. This approach does not only contribute to the KTV conjecture but also opens up exciting novel perspectives on the intimate relationships between processes on graphs and deeper underlying graph theoretical properties.}

\goodbreak
\bibliographystyle{plain}

\end{document}